\documentclass[12pt]{amsart}
\usepackage{amsmath}

\usepackage{a4wide,enumerate,xcolor,graphicx}
\usepackage{amsmath,comment,algorithm}
\allowdisplaybreaks
\usepackage{tabularx}
\usepackage{multirow}
\usepackage{graphicx}
\usepackage{subcaption}
\usepackage{geometry}
\usepackage{algorithm}
\usepackage{algpseudocode}
\usepackage{amsthm}
\usepackage{amssymb}
\usepackage{bbm}
\usepackage{setspace}
\usepackage{cite}
\usepackage{color}
\usepackage{physics}
\usepackage[section]{placeins}
\usepackage{tikz}
\usetikzlibrary{shapes.arrows,shadings,shadows,decorations.pathreplacing,decorations.markings,arrows.meta,calligraphy}

\usepackage[dvipsnames]{xcolor}
\usetikzlibrary{positioning}

\usepackage{hyperref}           % hyperref should always come last

\usepackage{graphicx} % Required for inserting images
\usepackage{amsmath,amssymb,a4wide,amsthm,mathtools}
\usepackage{hyperref}
\usepackage{enumitem}
\usepackage{xcolor}
\usepackage{comment}
\usepackage{graphicx}
\usepackage{csquotes}
\usepackage{chngcntr}
\usepackage{algorithm}

\newtheorem{theorem}{Theorem}
\newtheorem{assumptions}{Assumptions}
\newtheorem{proposition}{Proposition}
\newtheorem{remark}{Remark}
\def\b{\mathbf{b}}
\def\dom{\Omega}
\def\bou{{\partial\dom}}
\def\llh{{\Phi_\mathrm{AAO}}}
\def\sigep{{\Sigma_\varepsilon}}
\def\sigres{{\Sigma_\text{res}}}

\newcommand{\R}{{\mathbb R}}
\newcommand{\N}{{\mathcal N}}
\newcommand{\embed}{\hookrightarrow}

\def\dom{\Omega}

\def\Lc{\mathcal{L}}
\def\Rc{\mathcal{R}}

\begin{document}

\title[Bayesian Inverse Problems in All-at-Once Framework]{PDE-based Bayesian Inverse Problems in All-at-Once Framework}% with application to semiconductor devices}

\author[]{}
\address{}
\email{}

\author[]{Tram Thi Ngoc Nguyen$^{1}$, Fabian Schneider$^{2, 3}$, and Leila Taghizadeh$^{2,\ast}$}
\address{$^{1}$Department of Applied Mathematics and Theoretical Physics,
University of Cambridge, Cambridge, England}
\address{$^{2}$Institute of Analysis and Scientific Computing, TU Wien, Vienna, Austria}
\address{$^{3}$School of Engineering Sciences, LUT University, Lappeenranta, Finland}

\thanks{$^\ast$Corresponding author: leila.taghizadeh@tuwien.ac.at}

\date{\today}

 % \thanks{\textcolor{blue}{Leila's FWF grant + grant supporting Fabian + VSM support for Tram's visit + FWF support for Leila's visit}} 

%%%%%%%%%%%%%%%%%%%%%%%%%%%%%%%%%%%%%%%%%%%%%%%%%%%%%%%%%%%%%%%5
\begin{abstract} 
We propose a modern all-at-once framework for PDE-based Bayesian inversion by introducing the PDE residual as probabilistic information. In this framework, the PDE residual is treated as a soft constraint with a residual covariance and  is directly incorporated into the likelihood function. 
The new hybrid likelihood function %thus introduces a framework where the redundant 
eliminates PDE solves, thus reducing expensive computational costs, while performing rigorous Bayesian inference.
Theoretically, we establish the well-posedness of the posterior with uncertainty quantification, and numerically, we propose two efficiency inversion algorithms based on two posterior sampling methods.
In particular, one algorithm is based on Metropolis-within-Gibbs MCMC method %for posterior sampling 
with a PDE-informed block-diagonal Gaussian prior, and the other is based on a joint sampling strategy.
Numerical experiments for a semiconductor inverse problem confirm that the proposed Bayesian all-at-once approach yields a speedup of approximately a factor of two, %, and more with tightened prior on the PDE-solution, 
while having a comparable reconstruction quality as the conventional reduced-based setting.

\end{abstract}
%%%%%%%%%%%%%%%%%%%%%%%%%%%%%%%%%%%%%%%%%%%%%%%%%%%%%%%%%%%%%%%
\keywords{inverse problems, Bayesian inversion, all-at-once formulation, PDE, semicondutor}

% \paragraph{AMS classification:}  
\subjclass[2000]{}
%%%%%%%%%%%%%%%%%%%%%%%%%%%%%%%%%%%%%%%%%%%%%%%%%%%%%%%%%%%%%%%
\maketitle
%\tableofcontents

%%%%%%%%%%%%%%%%%%%%%%%%%%%%%%%%%%%%%%%%%%%%%%%%%%%%%%%%%%%%%%%%%
\section{Introduction}\label{s:Intro}

The Bayesian approach to inverse problems treats the unknown parameters as random variables, representing uncertainty explicitly through probability distributions  \cite{stuart2010inverse,kaipio2005statistical,SmithRalphUQ}. Prior information is incorporated through a prior probability distribution, providing a principled form of regularization, while observational data are introduced through the data likelihood function. Together, the prior and the likelihood are combined via Bayes' theorem to yield the posterior distribution, which encodes the updated state of knowledge about the unknown parameters given the observed data \cite{SmithRalphUQ}. Unlike deterministic approaches, the Bayesian framework quantifies uncertainty of the inversion by estimating the posterior probability distribution of the unknown and the
standard deviation of the posterior. This stochastic approach for inverse problems has been developed fundamentally for various forward model problems, such as PDEs \cite{buithanh2013computational,dashti2011uncertainty,cui2024multilevel,taghizadeh2020bayesian,stadlbauer2019bayesian,scheichl2021numerical}, and for many applications ranging from inverse modeling of COVID-19 pandemic \cite{taghizadeh2020uncertainty} to medical imaging \cite{taghizadeh2020bayesian}.

Despite this essential progress, the practical application of Bayesian inversion for PDE-constrained problems remains limited by a significant computational bottleneck. 
Indeed,  Bayesian inversion is primarily carried out in a \emph{reduced formulation}. This means each evaluation of the likelihood requires solving the underlying (often nonlinear) PDE, and sampling-based methods such as Markov chain Monte Carlo (MCMC) typically require many thousands of such evaluations to adequately explore the posterior distribution \cite{stuart2010inverse,bui2013computational}. 
This cost is further compounded when the unknown
parameter is a high- or infinite-dimensional spatial field, %making conventional Bayesian inversion computationally prohibitive for many practical problems,
as sampling efficiency tends to degrade with dimension \cite{cui2024multilevel,bui2013computational}. This computational burden motivates the development of methods that reduce the cost of forward PDE solves required for Bayesian inference.

In this view, an \emph{all-at-once formulation}  mitigates this exact issue.
All-at-once formulations originated in optimization with PDE constraints \cite{Taasan91,KunischSachs,KupferSachs,LeibfritzSachs,LeHe16,orozco-ghattas-97b,shenoy-heinkenschloss-cliff-98}. This concept was subsequently studied in the context of deterministic inverse problems \cite{BurgerMuehlhuberIP,BurgerMuehlhuberSINUM,HaAs01,KKV14b,aao16,BonazzoliHaddarVu}, and more recently specifically for time-dependent inverse problems \cite{Kaltenbacher:17,nguyen19,nguyen24,nguyen25}. The all-at-once approach offers an alternative that bypasses solving the PDE exactly (or up to very high precision). Instead, it inverts the PDE state and the unknown parameter simultaneously by minimizing the data misfit together with the PDE residual. The idea of minimizing the PDE residual has recently also gained popularity in the context of PDE learning \cite{Raissi18,Raissi,holler22learning_parameter_id, KaNg2022}. A theoretical comparison in terms of convergence guarantees between all-at-once and classical reduced inversion  can be found in \cite{TCC21}, and real-life applications are discussed, e.g.~in magnetic particle imaging \cite{KNSW,NguyenWald:2022}.

This work combines the all-at-once paradigm with Bayesian inverse problems, which can be viewed as a rather modern approach. Indeed, while the transfer of the all-at-once formulation from optimization to deterministic inverse problems  has been intensively studied, the  extension from deterministic to Bayesian inversion remains much less explored. To the best of the authors' knowledge, \cite{schlintl2021all} is the only existing work in this direction and serves as an inspiration for our research.

Our key contributions  are summarized as follows:
\begin{itemize}
\item 
Inspired by \cite{schlintl2021all}, we extend the all-at-once formulation from deterministic to Bayesian inversion for general elliptic PDE framework. Our framework  provides a full probabilistic treatment, including numerical realization and  uncertainty quantification. 
\item Within this all-at-once framework, the Bayesian posterior is defined via a hybrid likelihood that combines data and PDE residual. This bypasses the need for repeated high-fidelity PDE solves, which is  expensive in PDE-based Bayesian inversion.

\item To place this in a rigorous Bayesian framework, we establish the well-posedness of the AAO Bayesian posterior as well as its stability in the Hellinger distance with respect to perturbations of the data. We moreover propose two posterior sampling methods -- a Metropolis-within-Gibbs MCMC approach and a joint MCMC sampler -- with a PDE-informed joint prior. 

\item Finally,  
we present numerical experiments for parameter identification first for the benchmark a-b-c-s PDE problem, and then in a  doping profile inverse problem for semiconductor devices. Through a comparison with the conventional reduced formulation,  the all-at-once Bayesian method  shows a significant reduction in computational cost, while maintaining high reconstruction quality.
\end{itemize}

\subsection{The problem setting}\label{ss:problem setting}
We study the nonlinear stationary PDE with  unknown  spatially dependent parameter $\theta$ in the general form
\begin{equation}\label{original-pde}
\begin{split}
&F(\theta,u)=0 \qquad F: X\times U \to W^*
\end{split}
\end{equation}
Here, $\theta$ typically appears as a source function, diffusion parameter, potential term, etc., while $u$ denotes the state; both are specified in the physical domain $\Omega\subset \mathbb{R}^d, d\leq 3$. The nonlinear model $F$ represents spatial derivatives and other nonlinear functions in $u$ as well as boundary condition; for example, $F=(-\Delta u +\theta u+u^3; u|_{\partial\Omega})$.
Above $U$, $X$ are, respectively, the state and parameter spaces, while $W^*$, as the dual of $W$, denotes the residual space of the PDE. In this article, we consider the Sobolev space framework, which is a convention choice for elliptic PDEs.

In order to identify the parameter $\theta$ in \eqref{original-pde},  additional information on the state is acquired from finite measurements
\begin{align}\label{L}
L:U\to \R^m(=:Y) \qquad Lu=y,
\end{align}
where $L$ is a linear bounded operator.
In this sense, well-posedness of $L$ requires  each of its component lives in the dual space of $U$, i.e. $L\in (U^*)^m$ (c.f.~\cite{Aarset_2025}). 
 Moreover, we assume the measurement noise model is
\begin{equation}\label{eq:observation model}
    y = Lu + \varepsilon,
\end{equation}
where
$\varepsilon=(\varepsilon_1,\ldots, \varepsilon_m)^T\sim \mathcal{N}(0,\Sigma_\varepsilon)$ is an additive Gaussian noise vector with mean zero and covariance matrix $\Sigma_\varepsilon\in \R^{m\times m}$.

\subsubsection{Classical Reduced Setting }\label{ss:deterministic Reduced setting}
Conventionally, one constructs the reduced forward map
\begin{align}\label{G}
 G=L\circ S: X\to Y \qquad G(\theta)=y
\end{align}
by composing the parameter-to-state (or PDE solution) map
\begin{align}\label{S}
S:X\to  U \qquad S(\theta)= u \text{ solves PDE }\eqref{original-pde}
\end{align}
with the observation operator $L$ defined in \eqref{L}. 
Then the parameter inversion task  can be formulated as the minimization problem
\begin{align}
    \min_{\theta\in X} \|G(\theta)-y\|^2_Y + \Rc(\theta)
\end{align}
with suitable regularizer $\Rc:X\to \R^+$, encoding some prior information about the unknown parameter. For an overview on general regularization techniques, we refer to \cite{Kirsch,EHNBuch,KalNeuSch08}.

\subsubsection{All-At-Once setting}

In contrast to the composed map \eqref{G}, the all-at-once suggests to construct the high-dimensional forward map 
\begin{align}\label{G-aao}
\begin{split}
%\mathbb{G}:X\times\Vc\to \Uc^*\times H\times\Yc \qquad 
\mathbb{G}(\theta,u):=
\begin{pmatrix}
Lu\\
F(\theta,u)
\end{pmatrix}
=\begin{pmatrix}
y\\0
\end{pmatrix}
\end{split} \in  Y \times W^*
\end{align}
coupling the PDE residual in parallel with the data misfit. 
 Importantly, the state $u$ is now considered as an independent unknown together with the parameter $\theta$.  This means that $u$ does not necessarily live in the solution manifold of the PDE \eqref{original-pde} but rather on the whole vector space $U$.  
 Another aspect is that compared to the reduced formulation, the PDE residual space $W^*$ now plays an explicit role, leading to well-posedness and further treatment in the new setting.

The parameter identification in this setting then minimizes the all-at-one misfit of the form
\begin{align}
\min_{(\theta,u)\in X\times U}\|\mathbb{G}(\theta,u)-(y, 0)\|^2_{ Y \times W^*} + \Rc(\theta,u).
\end{align}
We emphasize that since this formulation does not require the parameter-to-state map $S$ as in \eqref{S}, it bypasses the need for coupling the inversion implementation with a nonlinear PDE solver. This particular feature makes the all-at-once inversion very efficient with low computational cost, while maintaining the reconstruction quality, as shall be shown in Section \ref{s:numerics}.

\section{Bayesian Inversion in  All-at-Once Framework} 
In the all-at-once regime, as the PDE solution $u$ is inferred simultaneously with the unknown parameter $\theta$, we need to develop a joint Bayesian estimation. 
More precisely, the inference is performed jointly on the state and parameter as an unknown joint random vector, namely $(\theta, u)\in X\times U$. In this section, we first present a Bayesian formulation in the conventional reduced setting (Section \ref{sec:bayes-red}), paving the way to the new all-at-once framework with full description for prior, hybrid likelihood function, and posterior setting (Section \ref{ss:AAO-Bayesian}).

\subsection{Bayesian Inversion Formulation in the Reduced Setting}\label{sec:bayes-red} 
In the reduced setting, as discussed in \eqref{G}, the forward operator  $G=L\circ S: X\to Y$ composes the parameter-to-state map $S$ with the finite observation map $L$. 
Bayesian inversion assumes the unknown parameter $\theta\in X$ is an infinite random variable/field and observation $y\in Y= \R^m$ is perturbed by finite Gaussian noise.  The statistical model for the observational error is thus defined by
\begin{equation}\label{eq:observation model2}
    y = G(\theta) + \varepsilon \qquad \varepsilon=(\varepsilon_1,\ldots, \varepsilon_m)^T\sim \mathcal{N}(0,\Sigma_\varepsilon), \quad \Sigma_\varepsilon:=\epsilon^2 I\in \R^{m\times m},
\end{equation}
where
$\varepsilon>0$ is an additive white Gaussian noise vector with zero mean and covariance matrix $\Sigma_\varepsilon$.

The goal of Bayesian inverse problems  is to find a posterior probability distribution $p(\theta|y)$ of the parameter of interest, given noisy measurement data $y$.  Furthermore,  
we assume a prior probability distribution $p(\theta)$ on the unknown parameter, according to some prior knowledge. Then the  posterior probability distribution of the unknown parameter is given by Bayes' rule as
\begin{equation}
    p(\theta|y) \propto p(y|\theta)p(\theta).
\end{equation}
The Bayesian log-likelihood function in the reduced setting $\Phi_\text{RED}\colon X\times Y\to \R$ is defined by the data misfit
\begin{equation}\label{eq:log-likelihood-reduced}
    \Phi_\text{RED}(\theta; y) :=\frac12\|y-G(\theta)\|^2_{\Sigma_\varepsilon},
\end{equation}
modeling the observed data noise using a Gaussian distribution. This distribution has zero mean and covariance matrix $\Sigma_\varepsilon$ acting on the finite-dimensional data space $Y$. Moreover, assuming a Gaussian prior distribution for the parameter field $\theta$ with mean $\theta_0$ and covariance operator $\Sigma_{\theta_0}$, the full Bayesian log-posterior function in the reduced setting becomes
\begin{equation}
  J_\text{RED}(\theta):= 
  \frac12\|y-G(\theta)\|^2_{\Sigma_\varepsilon} 
  +\frac12\|\theta - \theta_0\|^2_{\Sigma_{\theta_0}}.
\end{equation}
Here, one looks for the unknown $\theta\in X$ in some Cameron-Martin space $X_\sigma:=\text{Range} (\Sigma_{\theta_0}^{1/2}) \subset X$  equipped with  inner
product $(\cdot,\cdot)_{X_\sigma}:= (\Sigma_{\theta_0}^{-1/2}\cdot,\Sigma_{\theta_0}^{-1/2}\cdot)$  associated with the Gaussian prior \cite{stuart2010inverse}.
\begin{remark}
    In the classical reduced inversion, the PDE residual is enforced to be exactly zero as a hard constraint. Hence, each evaluation of $S(\theta)=u$ requires solving the PDE \eqref{original-pde} with high precision.
\end{remark}
Completing the reduced setting, we now introduce the new all-at-once framework for Bayesian inversion. Our plan is to detail the associated the prior, likelihood and  posterior.

\subsection{Bayesian Inversion Formulation in the All-at-once setting}\label{ss:AAO-Bayesian}
In contrast to the reduced setting, the PDE residual $R:=F(\theta, u)$ in the all-at-once regime is treated a soft constraint, meaning that we do not require it to be zero, instead sufficiently small.
In particular, we consider this residual as a \emph{virtual observation/data} that only in the ideal observation scenario attains $R = 0$.
We then place a Gaussian distribution with mean zero and covariance on $R$ and formulate
\begin{equation}\label{eq:residual model}
    R= F(\theta, u) + \eta\in W^*, \qquad \eta\sim \mathcal{N}(0,\Sigma_{\text{res}}).
\end{equation}
Here, the additive virtual noise $\eta$ 
is Gaussian  with mean zero and  residual covariance operator $\Sigma_{\text{res}}$. Similar to the prior, this forms a Cameron-Martin space $W^*_\sigma \subset W^\ast$ associated with the residual field. 

The PDE residual $R$ 
%as a Gaussian noise 
has a Mat\'ern-type covariance operator 
\begin{align}\label{cov-res}
\Sigma_{\text{res}} = \epsilon_\text{res}^2(-\Delta + I)^{-s}:W^*\to W^*, \qquad s>d/2
\end{align}
with $\epsilon_\text{res}^2>0$ and well-defined inverse elliptic operator \cite{Evans}. By elliptic regularity, $\Sigma_{\text{res}}$
gains $2s$ orders of smoothness, so the associated Cameron--Martin space
$
W^*_\sigma := \mathrm{Range}\big(\Sigma_{\text{res}}^{1/2}\big) 
$
is $s$ orders smoother than the base residual space $W^*$. And $\sigres$ is a well-defined trace class operator under appropriate choice of $W^*$ and smoothness $s$. 
As an example, with $W^* = H^{-1}(\Omega)$ and $d=2$, taking $s>1$, i.e.\ $\Sigma_{\text{res}} = \epsilon_\text{res}^2(-\Delta + I)^{-s}$, or
$\Sigma_{\text{res}} = \epsilon_\text{res}^2(-\Delta)^{-s}$ under Dirichlet boundary conditions, yields
$W^*_\sigma = H^{-1+s}(\Omega)$, and $\sigres:W^*\to W^*$ is a trace class operator.

Then the Bayes' theorem yields the joint posterior probability distribution $p(\theta,u|y)$ over $\theta$ and $u$, given measured data $y$ and artificial data $R$, as
\begin{equation}\label{eq:joint posterior}
    p(\theta,u|y, R=0) \propto p(y, R=0\,|\theta, u)p(\theta,u),
\end{equation}
where $p(y, R=0\,|\theta, u)$ and $p(\theta,u)$ are respectively the hybrid likelihood and the joint prior distributions. Each component will be explained in the subsequent subsections.

\begin{remark}
    It is clear from the all-at-once formulation that $u$ is not necessarily a PDE solution associating to model parameter $\theta$. Therefore, one does not need to solve any PDE, significantly reducing computational cost. The  PDE residual contributes to the likelihood  and can be enforced to be small by choosing $\Sigma_{\text{res}}$ appropriately.
\end{remark}

\subsubsection{Likelihood function with a hybrid data space}\label{ss:AAO-likelihood}
%In  classical Bayesian inverse problems, the data space is often assumed to be a finite-dimensional space $Y\subset \R^m$, and the PDE is assumed to be a hard constraint by exactly enforcing $F(\theta, u)=0$, and thus it is not part of the likelihood function. But in the AAO-Bayesian setting, 
We now define a hybrid data space $Y \times W^*$, where again $Y=\R^m$ is the finite-dimensional data space,
and $W^*$ is the infinite-dimensional PDE residual space. And the all-at-once likelihood function will be defined on such a hybrid space, directly influencing the resulting posterior distribution. 

By construction, the Bayesian AAO setting composes two sources of information, namely the observed data and an implicit residual data. 
The two source of information are given respectively by 
the likelihood of the observed data
\begin{equation}\label{eq:likelihood dist1}
    p(y|\theta,u):= \mathcal{N}(L(u),\Sigma_\varepsilon),\quad  \text{with }\Sigma_\varepsilon \text{ as in } \eqref{eq:observation model2}%\Sigma_\varepsilon:=\epsilon^2I
\end{equation}
and the physics likelihood in the sense of PDE residual
\begin{equation}\label{eq:likelihood dist2}
    p(R=0\,|\theta,u):= \mathcal{N}(F(\theta, u),\Sigma_{\text{res}}) \quad \text{with }\sigres \text{ as in }\eqref{cov-res}.%\Sigma_{\text{res}}:=\epsilon_{\text{res}}^2I
\end{equation}
We stress on the fact that here, the state $u$ is treated as an independent unknown rather than %being obtained by solving the PDE for a given 
a state associating to $\theta$. 
This allows the data noise $\varepsilon$ and the residual noise $\eta$ to remain unrelated, and the two terms act on different, uncoupled evaluations. % of $(\theta,u)$, with the PDE constraint itself entering only through $\Phi_{\mathrm{res}}$.
We assume that these two noise sources  are conditionally independent given $(\theta,u)$. Under  this, the hybrid likelihood is defined by
\begin{equation}\label{eq:log-likelihhod func}
    p(y,R=0\,|\theta,u):=  p(y|\theta,u) p(R=0\,|\theta,u).
\end{equation}
This means that in the Bayesian AAO setting, in addition to the classical statistical measurement noise, the physical law also contains implicit uncertainty.
Note that the assumption of conditional independence of the two noise sources is natural here, and is  weaker  than unconditional independence of $y$ and $R$. Since both $L(u)$ and $F(\theta,u)$ are deterministic, once $(\theta,u)$ is fixed, conditioning on $(\theta,u)$ removes the dependence between $y$ and $R$ induced by the shared state $u$. This leaves only the requirement that the measurement noise $\varepsilon$ in \eqref{eq:observation model2}  and the virtual residual noise $\eta$ in \eqref{eq:residual model} are independent random variables. This is a natural modeling assumption, since $\varepsilon$ represents physical (e.g., sensor) measurement noise arising at the data acquisition stage, while $\eta$ is a synthetic device introduced to relax the PDE constraint into a soft, probabilistic form \eqref{eq:residual model}.

Translating \eqref{eq:likelihood dist1} and \eqref{eq:likelihood dist2} into  data-likelihood function
\begin{equation}\label{eq:likelihood function1}
  p(y|\theta,u) \propto\exp\bigg(-\frac12\|y-L(u)\|^2_{\Sigma_\varepsilon}\bigg) =  \exp\bigg(-\frac{1}{2\epsilon^2}\|y-L(u)\|_2^2\bigg)
\end{equation}
and residual-likelihood function
\begin{equation}\label{eq:likelihood function2}
  p(R=0\,|\theta,u) \propto\exp\bigg(-\frac12\|F(\theta,u)\|^2_{\Sigma_{\text{res}}}\bigg),% =  \exp\bigg(-\frac{1}{2\epsilon_{\text{res}}^2}\|F(\theta,u)\|_{2}^2\bigg).
\end{equation}
the hybrid log-likelihood function $\Phi_\mathrm{AAO} \colon X \times U\times Y \to \R$  can be formulated as
\begin{equation}\label{eq: log-likelihood for aao}
 \Phi_\mathrm{AAO}(\theta,  u; y) :=\Phi_\mathrm{data}(u; y) + \Phi_\mathrm{res}(\theta,  u)=\frac12\|y-L(u)\|^2_{\Sigma_\varepsilon}
+\frac12\|F(\theta,u)\|^2_{\Sigma_{\text{res}}}
\end{equation}
up to additive constants independent of the unknowns and data.
Hence, the log-likelihood function in the AAO setting is the sum of the data misfit $\Phi_\mathrm{data}$ and PDE residual $\Phi_\mathrm{res}$.

\subsubsection{Joint prior distribution} 
We now define the joint prior distribution over the unknown parameter and the state fields.  Recall that the state $u$ is treated as a random field, instead of a deterministic quantity as in the reduced setting. We thus propose the priors
\begin{equation}\label{eq:parameter prior}
 p(\theta, u) = p(\theta)p(u) \quad\text{with}\quad p(\theta)  := \N (\theta_0,\Sigma_{\theta_0}),\quad p(u) := \N(u_0,\Sigma_{u_0}).
\end{equation}
The parameter prior $p(\theta)$ is usually an informative Gaussian prior distribution, while  the mean $u_0$ in the state prior is chosen as the linearized state, meaning $u_0 := S'(\theta_0)$. 

The factorization $p(\theta, u) = p(\theta)p(u)$ assumed above is a modeling choice made at the level of the prior, before any data or PDE-residual information is incorporated.  
This assumption is carried through the rest of the construction, it underlies the marginals \eqref{eq:parameter prior} and the block-diagonal covariance
$\Sigma_0$  for discretization later (see Section~\ref{ss:prior sampling}).
This in turn justifies (i)
separate Karhunen--Lo\`eve expansions for $\theta$ and $u$, and (ii) the product Cameron--Martin space $X_\sigma \times U_\sigma$.

\subsubsection{Joint posterior distribution} 
We now have all the ingredients to formulate the AAO posterior distribution. Indeed, \eqref{eq:joint posterior} can be expressed as
\begin{equation*}\label{eq:Bayes AAO}
     p(\theta,u|y, R=0) \propto \exp(- \Phi_\mathrm{AAO}(\theta,  u; y))\mu_{\theta_0}(\theta)\mu_{u_0}(u).
\end{equation*}
which by inserting \eqref{eq: log-likelihood for aao}~\eqref{eq:parameter prior} becomes
\begin{equation}\label{eq: posterior equation}
  p(\theta,u|y, R=0) \propto \exp\bigg(
  -\frac12\|y-L(u)\|^2_{\Sigma_\varepsilon}
  -\frac12\|F(\theta,u)\|^2_{\Sigma_{\text{res}}} 
  -\frac12\|\theta - \theta_0\|^2_{\Sigma_{\theta_0}} 
  -\frac12\|u -u_0\|^2_{\Sigma_{u_0}} \bigg)
\end{equation} 
and the corresponding cost functional is
\begin{equation}
  J_\text{AAO}(\theta, u):= 
  \frac12\|y-L(u)\|^2_{\Sigma_\varepsilon}
  +\frac12\|F(\theta,u)\|^2_{\Sigma_{\text{res}}} 
  +\frac12\|\theta - \theta_0\|^2_{\Sigma_{\theta_0}} 
  +\frac12\|u -u_0\|^2_{\Sigma_{u_0}}.
\end{equation}

Since the likelihood now involves both measurements and the PDE residual, it is not immediately obvious that the resulting Bayesian posterior distribution is well-defined. Section~\ref{sec:Well-posedness} is thus dedicated to establish the well-posedness result.
Before proceeding, we briefly remark that the same question for the deterministic counterpart, in terms of existence and stability of the  minimizer under noise level can be found, e.g.~in \cite{KaNg2022}.

\section{Well-posedness of Bayesian Inversion in the All-at-Once Setting}\label{sec:Well-posedness}

First of all, we recall assumptions \cite[Assumption 2.6]{stuart2010inverse} on the general log-likelihood function $\Phi$ such that the associated Bayesian inverse problem is well-defined. We then  adapt these assumptions to the all-at-once setting with the residual-based log-likelihood  $\Phi_\mathrm{AAO}$ as defined in \eqref{eq: log-likelihood for aao}. This adaptation results in the following Assumption \ref{assump}.

\begin{assumptions}\label{assump}
%Let $X$, $Y$, and $U$ be Banach spaces. 
Suppose that the all-at-once log-likelihood function $\Phi_\mathrm{AAO} \colon  X \times U\times Y \to \R$ satisfy:
\begin{enumerate}
\item[\rm (i)] (Lower bound). For any $\alpha>0$ and $R>0$, there exists $M=M(\alpha,R)\in\R$ such that for all $(\theta,  u)\in X \times U$ and $\|y\|_{Y}<R$, one has
\begin{align*}
  \Phi_\mathrm{AAO}(\theta,  u; y) \ge M(\alpha,R)-\alpha\|(\theta,  u)\|^2_{X \times U}.
\end{align*} 
\item[\rm (ii)] (Upper bound). For any $R>0$, there exists $K(R)>0$ such that for all $(\theta,  u)\in X \times U$ and $y\in Y$  with $\max\{\|(\theta,  u)\|_{X \times U},\|y\|_{Y}\}<R$, it holds
\begin{align*}
  \Phi_\mathrm{AAO}(\theta,  u; y) \le K(R).
\end{align*}
\item[\rm (iii)] (Local Lipschitz continuity in $(\theta,  u)$).\\ For any $R>0$, there exists $L(R)>0$ such that for all $(\theta_1,  u_1),(\theta_2,  u_2)\in X \times U$ and $y\in Y$ with 
$$\max\{\|(\theta_1,  u_1)\|_{X \times U},\|(\theta_2,  u_2)\|_{X \times U},\|y\|_{Y}\}<R,$$
%\red{I think the second component of data is some $s\in W^*$, i.e.~$(y,s)$ instead of $(y,F(\theta,u))$. The same is for (iv).} Yes!\\
the following holds
\begin{align}\label{ass-lip-unknown}
  |\Phi_\mathrm{AAO}(\theta_1,  u_1;y)-\Phi_\mathrm{AAO}(\theta_2,  u_2;y)|\le L(R)\|(\theta_1,  u_1)-(\theta_2,  u_2)\|_{X \times U}.
\end{align}
\item[\rm (iv)] (Local Lipschitz continuity in $y$).\\ For any $\beta>0$ and $R>0$, there exists $C(\beta,R)>0$ such that for all $(\theta,  u)\in {X \times U}$ and $y_1,y_2\in Y$ with 
$$\max\{\|y_1\|_{Y},\|y_2\|_{Y}\}<R,$$
the following holds
\begin{align}\label{ass-lip-data}
  |\Phi_\mathrm{AAO}(\theta,  u;y_1)-\Phi_\mathrm{AAO}(\theta,  u;y_2)|
 \leq \exp\big(\beta\|(\theta,  u)\|^2+C(\beta,R)\big)\|y_1-y_2\|_{Y}.
\end{align}
\end{enumerate}
\end{assumptions}

We aim to establish well-posedness not only for a specific PDE, but for a general class of nonlinear equations as introduced in \eqref{original-pde}. To this end, we restrict ourselves to semilinear equations, a class that is still sufficiently broad to cover a wide range of applications \cite{Pao,Troeltzsch} and allows us to verify Assumption~\ref{assump}. 
In the following,  we denote by $X\embed Y$  the general continuous Sobolev embeddings,  by $C_{X\to Y}$ their operator norms. And the generic constant $C$ may take different values at each occurrence.

\begin{proposition}\label{prop:wellposed}
Consider the semilinear elliptic equation of the form
    \begin{align}\label{eq: abcs}
    F(\theta, u):= [B\theta]u +f(u)-s 
\end{align}
with the bilinear operator $B\in\Lc(X,\Lc(U,W^*))$, and the nonlinearity $f\in\mathcal{C}^1(\R)$ fulfills the growth condition
\begin{align}\label{f-growth}
|f(x)-f(y)|\leq C_f(1+|x|^\gamma +|y|^\gamma)|x-y| \quad\forall x,y\in\R, \text{with some } \gamma\geq0.
\end{align}
Suppose the function spaces $X,U,W^*$ and the  residual covariance operator $\sigres$ in \eqref{cov-res} satisfy
\begin{align}\label{embeddings}
W^*_\sigma \embed L^r(\Omega),\quad U\embed L^s(\Omega), \quad U\embed L^\frac{sr\gamma}{s-r}(\Omega), \quad B\in\Lc(X,\Lc(U,W^*_\sigma))\quad s\geq r%\blue{\geq 1. }  
\end{align}
with the Cameron-Martin subspace $W^*_\sigma=\sigres^{1/2}(W^*)$. Moreover, assume that the log-likelihood function $\Phi_\mathrm{AAO}$ is bounded at zero. Then Assumption \ref{assump} holds.  
\end{proposition}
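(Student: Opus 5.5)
We verify the four items of Assumption~\ref{assump} for
\[
\Phi_\mathrm{AAO}(\theta,u;y)=\tfrac12\|y-Lu\|^2_{\Sigma_\varepsilon}+\tfrac12\|F(\theta,u)\|^2_{\Sigma_{\text{res}}}.
\]
The norm $\|\cdot\|_{\Sigma_{\text{res}}}$ is the Cameron--Martin norm of $W^*_\sigma$. The data term involves only the finite-dimensional $Y=\R^m$ and the bounded linear $L\in(U^*)^m$, so it is elementary. The main work is a local Lipschitz estimate for the map $(\theta,u)\mapsto F(\theta,u)\in W^*_\sigma$. We therefore first establish:

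\emph{Key lemma.} For $\|(\theta_i,u_i)\|_{X\times U}\le R$,
\[
\|F(\theta_1,u_1)-F(\theta_2,u_2)\|_{W^*_\sigma}\le C(R)\,\|(\theta_1-\theta_2,u_1-u_2)\|_{X\times U}.
\]

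\textbf{Bilinear part.} Write
\[
[B\theta_1]u_1-[B\theta_2]u_2=[B(\theta_1-\theta_2)]u_1+[B\theta_2](u_1-u_2).
\]
Since $B\in\Lc(X,\Lc(U,W^*_\sigma))$, this is bounded by $\|B\|\,R\,(\|\theta_1-\theta_2\|+\|u_1-u_2\|)$.

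\textbf{Nonlinear part.} We must bound $\|f(u_1)-f(u_2)\|_{W^*_\sigma}$, which is the obstacle, since $W^*_\sigma$ is a smooth-ish (negative or low order) space and we control only Lebesgue norms. I would pass through duality. The embedding $W^*_\sigma\embed L^r$ suggests interpreting $f(u)$ as an element of $W^*_\sigma$ via the dual pairing. If $r$ is such that $L^r\embed W^*_\sigma$ (or the intended reading is the norm estimate in $L^r$ controlling the residual norm), then the growth condition \eqref{f-growth} and H\"older with exponents $\frac{s}{r}$ and $\frac{s}{s-r}$ give
\[
\|f(u_1)-f(u_2)\|_{L^r}\le C_f\,\big\|1+|u_1|^\gamma+|u_2|^\gamma\big\|_{L^{\frac{sr}{s-r}}}\,\|u_1-u_2\|_{L^s}.
\]
Furthermore,
\[
\||u|^\gamma\|_{L^{\frac{sr}{s-r}}}=\|u\|^\gamma_{L^{\frac{sr\gamma}{s-r}}}\le C R^\gamma
\]
by the assumed embedding, and $\|u_1-u_2\|_{L^s}\le C\|u_1-u_2\|_U$. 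This is exactly why the exponents in \eqref{embeddings} appear. The delicate point is matching the direction of the embedding between $L^r$ and $W^*_\sigma$ so that the $L^r$ bound transfers to the $\Sigma_{\text{res}}$-norm. I would try first to read it as giving $\|\cdot\|_{\Sigma_{\text{res}}}\le C\|\cdot\|_{L^r}$ on the range of $f$, handling $\Omega$ bounded so that constants lie in $L^r$. The source $s$ cancels in the differences.

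\textbf{Deriving (i)--(iv).} With the lemma, the items follow in order.

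\emph{(i)} This holds since $\Phi_\mathrm{AAO}\ge0$; take $M=0$.

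\emph{(ii)} Applying the lemma with $(\theta_2,u_2)=(0,0)$ gives
\[
\|F(\theta,u)\|_{\Sigma_{\text{res}}}\le\|F(0,0)\|_{\Sigma_{\text{res}}}+C(R)R,
\]
where $\|F(0,0)\|_{\Sigma_{\text{res}}}=\|f(0)-s\|_{\Sigma_{\text{res}}}$ is finite by the boundedness of $\Phi_\mathrm{AAO}$ at zero. Also $\|y-Lu\|\le R+\|L\|R$, so $\Phi_\mathrm{AAO}\le K(R)$.

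\emph{(iii)} Use $|a^2-b^2|=|a-b|(a+b)$ for both terms. The residual term is bounded via the lemma and the bound from (ii). The data term is bounded by $\|L\|\,\|u_1-u_2\|_U\,(2R+2\|L\|R)$ up to $\Sigma_\varepsilon^{-1}$ factors.

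\emph{(iv)} Only the data term depends on $y$, and
\[
\big|\|y_1-Lu\|^2-\|y_2-Lu\|^2\big|\le\|y_1-y_2\|\,\big(2R+2\|L\|\,\|u\|\big).
\]
Finally, $2R+2\|L\|\,\|u\|\le\exp(\beta\|(\theta,u)\|^2+C(\beta,R))$, because a linear function is dominated by any exponential of a quadratic after choosing $C(\beta,R)$ large.
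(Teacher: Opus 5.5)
Your proposal follows essentially the same route as the paper's proof. It uses nonnegativity for (i) and the bound at zero plus the local Lipschitz estimate for (ii). For the Lipschitz bound on $F$ into $W^*_\sigma$, it combines the bilinear splitting with the growth condition, H\"older with $\frac{1}{r}=\frac{s-r}{sr}+\frac1s$, and the embeddings. It then treats (iii)--(iv) by a difference of squares (the paper uses $\|a\|^2-\|b\|^2=2(a-b,b)+\|a-b\|^2$) with a polynomial factor dominated by the exponential.

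The embedding caveat you flag is genuine, and the paper resolves it exactly as you propose. Its proof applies the embedding as $\|\cdot\|_{\Sigma_{\text{res}}}\le C\|\cdot\|_{L^r}$, i.e.\ effectively $L^r\embed W^*_\sigma$ (consistent with its example $W^*_\sigma=H^{-1}$), even though it labels the constant $C_{W^*_\sigma\to L^r}$ and the hypothesis is stated as $W^*_\sigma\embed L^r$.
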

\begin{remark}[a-b-c-s problem] \label{rem:abcs-problem}
    The framework in Proposition \ref{prop:wellposed} covers a large class of equations with a feasible choice of function space setting. To demonstrate this, we consider the  reaction-advection-diffusion equation
\begin{equation}\label{eq: abcs}
\begin{split}
    \nabla\cdot(a\nabla u)+\mathbf{b}\cdot\nabla u + cu +u^\gamma&=s \quad\text{on }\dom , \quad \gamma\geq0 \\
    u|_\bou&= 0 
    \end{split}
\end{equation}
with unknown diffusion $a$, advection $\mathbf{b}$, potential $c$ and source term $s$; so $\theta=(a,\b,c,s)\in X:= X_a\times X_b\times X_c\times X_s$. One could choose, for instance, with $d=3$,
\begin{align*}
&U=H^2(\dom)\cap H^1_0(\Omega)\embed L^\infty(\Omega),\quad X_a=H^2(\Omega), \quad X_b=H^1(\Omega),\quad  X_c=L^2(\Omega),\quad X_s=L^2(\Omega),\\ &W^*=H^{-2}(\Omega),\quad \sigres=(I-\Delta)^{-2}, \quad\text{and} \quad W^*_\sigma = L^2(\dom).
\end{align*}
or, with $d=2$,
\begin{align*}
&U=H^1_0(\Omega)\embed L^s(\Omega), s<\infty\quad X_a=H^2(\Omega), \quad X_b=H^1(\Omega),\quad  X_c=L^2(\Omega),\quad X_s=L^2(\Omega),\\ &W^*=H^{-2-\epsilon}(\Omega),\quad \sigres=(I-\Delta)^{-1-\epsilon}, \epsilon>0\quad\text{and} \quad W^*_\sigma = H^{-1}(\dom).
\end{align*}
\end{remark}
We are now ready to prove Proposition \ref{prop:wellposed}.

\begin{proof}[Proof of Proposition \ref{prop:wellposed}]

(i) Boundedness of $\llh$ from below is clear from its non-negativity.

(ii) Local boundedness of $\llh$ from above follows from its local Lipschitz continuity, which shall be proven in (iii)-(iv), and boundedness at the point $0\in X\times U\times Y$ via the splitting
$|\Phi_\mathrm{AAO}(\theta,u;y)|\leq |\Phi_\mathrm{AAO}(\theta,u;y)-\Phi_\mathrm{AAO}(0)|+|\Phi_\mathrm{AAO}(0)|$.

(iii) Local Lipschitz continuity in the unknowns: Consider $\|(\theta_1,  u_1)\|,\|(\theta_2,  u_2)\|,\|y\|<R$. Using the prallelogram $\|a\|^2-\|b\|^2=2(a-b,b)+\|a-b\|^2$, one estimates
\begin{align*}
  \Phi_\mathrm{AAO}(\theta_1,  u_1;y)&-\Phi_\mathrm{AAO}(\theta_2,  u_2;y)\\\
 &= \|y-L(u_1)\|^2_{\Sigma_\varepsilon}-\|y-L(u_2)\|^2_{\Sigma_\varepsilon}
 +\|F(\theta_1,u_1)\|^2_{\Sigma_{\text{res}}}-\|F(\theta_2,u_2)\|^2_{\Sigma_{\text{res}}}\\
 &=2\big(L(u_1)-L(u_2),L(u_2)-y\big)_\sigep+\|L(u_1)-L(u_2)\|_\sigep^2\\
 &\qquad + 2\big(F(\theta_1,u_1) -F(\theta_2,u_2), F(\theta_2,u_2)\big)_\sigres+ \|F(\theta_1,u_1)-F(\theta_2,u_2)\|^2_\sigres\\
 &=: M_1+M_2+N_1+N_2.
 \end{align*}
 
For the $M$ terms, since the measurement operator $L:U\to Y$ as in \eqref{L} is linear bounded, and $Y_\sigma$ is the finite-dimensional Cameron-Martin subspace $Y_\sigma:=\sigep^{1/2}(Y)$ %endowed with $\|\cdot\|_{Y_\sigma}:=\|\sigresvh\cdot\|_2$, 
with the positive definite covariance matrix $\sigep$ as in \eqref{eq:observation model2}, 
one has  $\|L(u_1-u_2)\|_{\sigep}\leq \|L\|_{U\to Y_\sigma}\|u_1-u_2\|_U$.
Hence
\begin{align}\label{lip-3-1}
|M_1+M_2|&\leq \|L\|_{U\to Y_\sigma}\|u_1-u_2\|_{U}\left(2R\|L\|_{U\to Y_\sigma}+2R+2R\|L\|_{U\to Y_\sigma}\right) \nonumber\\
&=:C(\|L\|,R)\|u_1-u_2\|_{U}.
\end{align}

Regarding the $N$ terms, we first evaluate $\|F(\theta_1,u_1)-F(\theta_2,u_2)\|_\sigres$.
We pay attention to the nonlinearity $f$ with  the bound on its growth assumed in \eqref{f-growth}. 
The fact that Cameron-Martin subspace $W^*_\sigma=\sigres^{1/2}(W^*)\subset W^*$ is a Sobolev space, thanks to the Mat\'ern-type covariance, allows us to estimate the $\sigres$-norm as Sobolev norm, then further employ Sobolev embeddings. In the following, we subsequently apply the growth condition \eqref{f-growth},  H\"older's inequality and the embeddings \eqref{embeddings} to obtain
\begin{align*}
 \|f(u_1)-f(u_2)\|_\sigres &\leq   C_{W^*_\sigma\to L^r}\|(1+|u_1|^\gamma +|u_2|^\gamma)|u_1-u_2|\|_{L^r}\\
 &\leq C_{W^*_\sigma\to L^r}(1+\|u_1\|^\gamma_{L^\frac{rs\gamma}{s-r}} +\|u_2\|^\gamma_{L^\frac{sr\gamma}{s-r}})\|u_1-u_2\|_{L^s}\\
 &\leq C_{W^*_\sigma\to L^r} C_{U\to L^s}C_{U\to L^\frac{sr\gamma}{s-r}}(1+\|u_1\|_U^\gamma+\|u_2\|_U^\gamma)\|u_1-u_2\|_U.
\end{align*}
Now, the term concerning the bilinear operator $B$ can be treated by inserting the auxiliary $[B\theta_1]u_2$, yielding $[B\theta_1]u_1- [B\theta_2]u_2 = [B\theta_1](u_1-u_2) + [B(\theta_1-\theta_2)]u_2$. This together with the above estimate for $f$ implies the total bound for $F$ as
\begin{align} \label{Lip-F}
\|F(\theta_1,u_1)&-F(\theta_2,u_2)\|_\sigres\leq \|[B\theta_1](u_1-u_2)\|_{W^*_\sigma} + \|[B(\theta_1-\theta_2)]u_2)\|_{W^*_\sigma} + \|f(u_1)-f(u_2)|\|_{W^*_\sigma} \nonumber\\
&\leq \|B\theta_1\|_{U\to W^*_\sigma}\|u_1-u_2\|_U +  \|B(\theta_1-\theta_2)\|_{U\to W^*_\sigma}\|u_2\|_U\nonumber \\
&\qquad+C_{W^*_\sigma\to L^r} C_{U\to L^s}C_{U\to L^\frac{sr\gamma}{s-r}}(1+\|u_1\|_U^\gamma+\|v_2\|_U^\gamma)\|u_1-u_2\|_U \nonumber\\
&\leq \|B\|_{X\to \Lc(U,W^*_\sigma)}\|\theta_1\|_X\|u_1-u_2\|_U + \|B\|_{X\to \Lc(U,W^*_\sigma)}\|\theta_1-\theta_2\|_X\|u_2\|_U \nonumber\\
&\qquad  +C_{W^*_\sigma\to L^r} C_{U\to L^s}C_{U\to L^\frac{sr\gamma}{s-r}}(1+\|u_1\|_U^\gamma+\|v_2\|_U^\gamma)\|u_1-u_2\|_U \nonumber\\
&\leq C(\|B\|,R,\gamma)\|(\theta_1,u_1)-(\theta_2,u_2)\|_{X\times U}
 \end{align}
with some positive constant $C$ depending on $R$, the differential operator $B$ and the nonlinearity growth $\gamma$.
Using this bound, we derive the estimate for the $N$ terms as
\begin{align}\label{lip-3-2}
  |N_1+N_2| &\leq \|F(\theta_1,u_1)-F(\theta_2,u_2)\|_\sigres\Big( 3\|F(\theta_2,u_2)\|_\sigres+\|F(\theta_1,u_1)\|_\sigres\Big) \nonumber\\
  &\leq  C(\|B\|,R,\gamma)\|(\theta_1,u_1)-(\theta_2,u_2)\|_{X\times U} 4\max_{i=1,2}\|F(\theta_i,u_i)\|_\sigres \nonumber\\
  &\leq C(\|B\|,\|F(0,0)\|,R,\gamma)\|(\theta_1,u_1)-(\theta_2,u_2)\|_{X\times U}.
\end{align}
Above evidently, the terms $\|F(\theta_i,u_i)\|, i=1,2$ are bounded by $\|F(\theta_i,u_i)-F(0,0)\|+\|F(0,0)\|_\sigres$, which is again by \eqref{Lip-F} bounded above by some $C(\|B\|,R,\gamma)$ and $\|F(0,0)\|$. This explains the constant $C(\|B\|,\|F(0,0)\|,R,\gamma)$ in the above estimate.
Finally, combining \eqref{lip-3-1} and \eqref{lip-3-2} proves local Lipschitz continuity \eqref{ass-lip-data} of $\Phi_\mathrm{AAO}$ w.r.t $(\theta,u)$.

(iv) Local Lipschitz continuity in data:  We now look at the difference of $\Phi_\mathrm{AAO}$ at any $\|y_1\|,\|y_2\|<R$. Similar to (iii), the parallelogram yields 
\begin{align*}
\Phi_\mathrm{AAO}(\theta,  u;y_1)-\Phi_\mathrm{AAO}(\theta,  u;y_2)
 &= \|y_1-L(u)\|^2_{\Sigma_\varepsilon}-\|y_2-L(u)\|^2_{\Sigma_\varepsilon}
% +\|F(\theta,u)-s_1\|^2_{\Sigma_{\text{res}}}-\|F(\theta,u)-s_2\|^2_{\Sigma_{\text{res}}} %skip s-term
\\
 &=2\big(y_1-y_2,y_2-Lu\big)_\sigep+\|y_1-y_2\|_\sigep^2. 
 %+ 2\big(s_1-s_2, s_2-F(\theta,u)\big)_\sigres+ \|s_1-s_2\|^2_\sigres %skip s-term
 \end{align*}
 Proceeding in the same fashion as \eqref{lip-3-1}-\eqref{lip-3-2}, with keeping in mind that the Lipschitz constant in \eqref{Lip-F} contains $\|u\|^\gamma$ explicitly, meaning that
 \begin{align*}
|\Phi_\mathrm{AAO}(\theta,  u;y_1)-\Phi_\mathrm{AAO}(\theta,  u;y_2)| 
&\leq (c_1\|u\|+c_2\|u\|^\gamma + c_3\|\theta\|+c_4R\big)\|y_1-y_2\|_{Y\times W^\ast}
 \end{align*}
 with some positive constants $c_1, c_2,c_3$.
The polynomial of order $\gamma$ in the Lipschitz constant is clearly dominated by the exponential rule required in \eqref{ass-lip-data} with some power $\beta>0$. Thus, the claim (iv) holds true. The proof is complete.
\end{proof}

\begin{remark}
In the reduced setting, Lipschitz continuity (or even differentiability) of the forward map relies on corresponding properties of the nonlinear parameter-to-state map, which in turn are typically established via the well-posedness of the linearized PDEs and the implicit function theorem \cite{TCC21,nguyen-passive}. In contrast, the all-at-once setting is more straightforward, as its forward map does not involve an inverse differential operator, as can be seen in the proof of Proposition~\ref{prop:wellposed}.
\end{remark}

With Assumption \ref{assump} established, we are now ready to show the well-posedness result.
\begin{theorem}[Well-posedness]\label{well-posedness Thm}
Let the probability measure $\mu_0$ be given by  $\mu_0(\theta, u) := \mu_{\theta_0}(\theta)\mu_{u_0}(u)$. 
Assume that the function $\Phi_\mathrm{AAO}\colon X \times U \times Y \to \R$ is as in \eqref{eq: log-likelihood for aao} and satisfies Assumptions~\eqref{assump}. If $\mu^y(d\theta,du)$ is the joint probability measure on $(\theta,u)$, given data $y$ and $F(\theta,u)=0$, then $\mu^y \ll \mu_0$ with Radon-Nikodym derivative 
\begin{equation}\label{eq:posterior-RN derivative}
    \frac{d\mu^y}{d\mu_0}(\theta,u)=\frac{1}{Z}\exp\big( -\Phi_\mathrm{AAO}(\theta,u;y) \big),
\end{equation}
where 
\begin{equation}\label{eq:normalization const-RN derivative}
    Z:=\int_{X\times U}\exp\big( -\Phi_\mathrm{AAO}(\theta,u;y) \big)\mu_0(d\theta,du)
\end{equation}
with $0<Z(y)<\infty$ for every $y$. Moreover, $\mu^y$ is locally Lipschitz with respect to data $y$ in the Hellinger distance; that is, there exists $C=C(r)>0$ such that
\begin{equation}\label{eq:hellinger}
    d_{\text{Hell}} (\mu^{y_1}, \mu^{y_2})\leq C \|y_1-y_2\|_Y
\end{equation}
for all $y_1,y_2\in Y$ with $\max\{\|y_1\|_Y, \|y_2\|_Y\}<r$.
\end{theorem}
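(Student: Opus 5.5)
The plan is to follow the standard route of Stuart \cite{stuart2010inverse} (Theorems 4.1--4.2 there) on the product space $\mathcal{H}:=X\times U$ with the product Gaussian prior $\mu_0=\mu_{\theta_0}\otimes\mu_{u_0}$, using Assumption~\ref{assump} (verified in Proposition~\ref{prop:wellposed}). First observe that $\mu_0$ is itself a Gaussian measure on the separable Hilbert space $X\times U$, with mean $(\theta_0,u_0)$ and block-diagonal covariance $\mathrm{diag}(\Sigma_{\theta_0},\Sigma_{u_0})$. Both blocks are trace class, so Fernique's theorem applies: there is $\kappa>0$ with $\int\exp(\kappa\|(\theta,u)\|^2_{X\times U})\,\mu_0(d\theta,du)<\infty$. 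Measurability of $\Phi_\mathrm{AAO}(\cdot;y)$ with respect to $\mu_0$ follows from its continuity in $(\theta,u)$, Assumption~\ref{assump}(iii). For fixed $y$, the map $(\theta,u,y)\mapsto\Phi_\mathrm{AAO}$ is also jointly continuous by (iii)--(iv).

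Second, I show $0<Z(y)<\infty$. For the upper bound, take $\alpha<\kappa$ in Assumption~\ref{assump}(i). This gives $\exp(-\Phi_\mathrm{AAO})\le \exp(-M(\alpha,r)+\alpha\|(\theta,u)\|^2)$, which is integrable by Fernique. In our case it is even simpler, since $\Phi_\mathrm{AAO}\ge0$ gives $Z\le1$. For positivity, take the ball $B_R=\{\|(\theta,u)\|<R\}$ and use (ii): $Z\ge e^{-K(R)}\mu_0(B_R)$. We have $\mu_0(B_R)>0$ for $R$ large, since $\mu_0$ is a probability measure on $X\times U$ and $B_R\uparrow X\times U$. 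This makes the Radon--Nikodym formula \eqref{eq:posterior-RN derivative} meaningful. That $\mu^y$ so defined is the conditional law of $(\theta,u)$ given $(y,R=0)$ follows from the standard Bayes theorem for conditioning, e.g.\ \cite[Thm.~6.31]{stuart2010inverse}, applied to the joint measure of $((\theta,u),y)$. One subtlety is that the residual observation lives in infinite-dimensional $W^*$. I will treat the residual term as a fixed potential reweighting the prior, i.e.\ define $\tilde\mu_0\propto e^{-\Phi_\mathrm{res}}\mu_0$. Then only the finite-dimensional data $y$ is conditioned on, where the Gaussian density $\mathcal N(Lu,\Sigma_\varepsilon)$ relative to $\mathcal N(0,\Sigma_\varepsilon)$ gives the likelihood directly.

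Third, for the Hellinger estimate I start by bounding $|Z(y_1)-Z(y_2)|$. Using $|e^{-a}-e^{-b}|\le|a-b|$ for $a,b\ge0$ and Assumption~\ref{assump}(iv) with $\beta<\kappa$, I get $|Z(y_1)-Z(y_2)|\le \int e^{\beta\|(\theta,u)\|^2+C(\beta,r)}\,d\mu_0\,\|y_1-y_2\|\le C\|y_1-y_2\|$. I also need a uniform lower bound $Z(y_i)\ge c(r)>0$ for $\|y_i\|<r$, which follows from the argument above since $K(R)$ depends only on $R\ge r$. Then I split
\[
2d_{\text{Hell}}^2\le \frac{2}{Z(y_1)}\int\Big(e^{-\Phi_1/2}-e^{-\Phi_2/2}\Big)^2d\mu_0+2\big|Z(y_1)^{-1/2}-Z(y_2)^{-1/2}\big|^2Z(y_2),
\]
with $\Phi_i=\Phi_\mathrm{AAO}(\cdot;y_i)$. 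The first term is bounded by $\frac14\int|\Phi_1-\Phi_2|^2d\mu_0\le C\int e^{2\beta\|(\theta,u)\|^2}d\mu_0\,\|y_1-y_2\|^2$, again with $2\beta<\kappa$. The second term is bounded via $|Z_1^{-1/2}-Z_2^{-1/2}|^2\le C\max(Z_1^{-3},Z_2^{-3})|Z_1-Z_2|^2$. Together these give \eqref{eq:hellinger}.

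I expect the main obstacle to be not the estimates, which are routine given Assumption~\ref{assump}, but justifying that $\mu_0$ satisfies Fernique on the product space in the correct norm. Assumption~\ref{assump} is stated in the $X\times U$ norm, so one must check that $\mu_0(X\times U)=1$, i.e.\ that the prior covariances are trace class on $X$ and $U$ respectively. A second point needing care is the interpretation of conditioning on the infinite-dimensional virtual datum $R=0$, which is why I plan to absorb $\Phi_\mathrm{res}$ into a reweighted reference measure, as described in the second step.
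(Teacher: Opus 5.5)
Your proposal is correct and follows the same route as the paper: the paper's proof combines Proposition~\ref{prop:wellposed}, which gives Assumption~\ref{assump}, with \cite[Theorem~4.2]{stuart2010inverse} and \cite[Theorem~14, Lemma~16]{DashtiStuart2017}, and what you write out (Fernique, $0<Z\le 1$ with a uniform lower bound on balls, and the two-term Hellinger split) is exactly the content of those cited results. The two points you flag are details the paper's one-line proof leaves unaddressed, and your handling of them is sound: first, $\mu_0(X\times U)=1$ (trace-class prior covariances on $X$ and $U$) is an implicit hypothesis needed for Fernique; second, the infinite-dimensional virtual datum $R=0$ is most cleanly handled by absorbing $\Phi_\mathrm{res}$ into a reweighted reference measure, so that only the finite-dimensional $y$ is conditioned on.
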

\begin{proof} Proposition~\eqref{prop:wellposed} shows that $\Phi_\mathrm{AAO}$ satisfies Assumptions~\eqref{assump}.  
With this, the stability \eqref{eq:hellinger} then carries through
%over from the single-potential case 
following \cite[Theorem~4.2]{stuart2010inverse} or \cite[Theorem~14, Lemma~16]{DashtiStuart2017}).
    \end{proof}

\section{%Bayesian Approach for the 
Posterior Estimation in the All-at-Once Framework}

In this section, we propose two numerical approaches to solve the  Bayesian inversion in the AAO framework and give some practical sampling details. We shall construct and sample from the posterior distribution using a Metropolis-within-Gibbs (MwG) MCMC algorithm, which alternates between updating the unknown
state $u$ and the Bayesian model parameter $\theta$ (Algorithm \ref{alg:sampler}). In addition, we will give details on a conventional joint sampling method that updates $u$ and $\theta$ simultaneously (Algorithm \ref{alg:jointsampler}). Both approaches will be verified numerically on two PDE-based inverse problems later in Section~\ref{s:numerics}.

\subsection{Prior Sampling}\label{ss:prior sampling}
 For the discretization of a Gaussian random field prior in general, we employ the truncated Karhunèn-Loeve (KL) expansion.
The KL expansion represents a zero-mean Gaussian process 
$X(t, \omega)$ with a covariance operator $C$ as
\begin{equation}\label{eq: KL expansion}
    X(t, \omega) = \sum_{i=1}^{\infty} \sqrt{\lambda_i}\, \phi_i(t)\, \xi_i(\omega),
\end{equation}
where $\{\lambda_i, \phi_i\}_{i=1}^{\infty}$ are the eigenpairs of the covariance operator $C$ and $\xi_i \sim \mathcal{N}(0,1)$ are independent standard Gaussian random variables. The KL-expansion converges provided that the covariance $C$ is of trace class. 

For the Bayesian AAO, we define a joint prior for the parameter--state pair $(\theta,u)$ as the Gaussian measure
$$
\mu_0=\mathcal{N}(m_0,\Sigma_0),
$$
where $m_0:= (\theta_0,u_0)^T$, and $\Sigma_{0}:=\begin{pmatrix}
\Sigma_{\theta_0} &  0\\
0                 &  \Sigma_{u_0}   
\end{pmatrix}$
is a PDE-informed block-diagonal joint covariance. 
For the parameter prior, we assume symmetric, nonnegative, and trace-class prior covariance operators for $\Sigma_{\theta_0}\colon X\to X$. %and $\Sigma_{u_0}\colon U\to U$. 
Here, we take the Mat\'ern--Whittle covariance operator as %for the parameter $\theta$ defined by
\begin{align}\label{3.MW}
  \Sigma_{\theta_0}(x,y) = \sigma^2\frac{2^{1-\nu}}{\Gamma(\nu)}
  \bigg(\frac{d_0}{\ell}\bigg)^\nu K_\nu\bigg(\frac{d_0}{\ell}\bigg),
  \quad x,y\in \Omega,
\end{align}
where $d_0=\|x-y\|_2$, $\Gamma$ is the Gamma function, and $K_\nu$ is the modified Bessel function. The parameters $\sigma^2$, $\nu$, and $\ell$ denote the variance, smoothness, and spatial correlation length, respectively.

In order to construct a PDE-informed state  state covariance $\Sigma_{u_0}$, the parameter/source prior covariance $\Sigma_{\theta_0}$ is propagated through the linearized PDE solution map, i.e.,
\begin{align}\label{prior-u}
\Sigma_{u_0} := S'(\theta_0)\Sigma_{\theta_0} S'(\theta_0)^*,
\end{align} 
where $u_0=S(\theta_0)$ is the  prior mean of the state. 
Consequently, the resulting joint prior is a PDE-informed block-diagonal Gaussian prior, in the sense that the source component is endowed with a Matérn--Whittle covariance, whereas the state covariance incorporates the local sensitivity of the PDE solution to perturbations in the source.
We then define the joint prior measure as
\begin{equation}\label{eq:prior measure 2}
\mu_0
=
\mathcal{N}(\theta_0,\Sigma_{\theta_0})
\otimes
\mathcal{N}(u_0,\Sigma_{u_0})
=
\mathcal{N}(m_0,\Sigma_0).
\end{equation}

 We note that~\eqref{prior-u}  is an approximation, where the mean $u_0 := S(\theta_0)$ and covariance $\Sigma_{u_0}$ are derived from a linearization of the forward map $S$ around $\theta_0$. Although this linearization induces a nonzero prior cross-covariance
$
\operatorname{Cov}(\theta,u)
=
\Sigma_{\theta_0}S'(\theta_0)^*,
$
we neglect this term and employ the block-diagonal covariance $\Sigma_0$ in our formulation. 
This approximation is made to preserve the simple product-Gaussian Cameron--Martin structure and to avoid the additional complexity of constructing correlated proposals in Algorithm~\ref{alg:sampler}-\ref{alg:jointsampler}. As demonstrated in Section~\ref{s:numerics}, the resulting approximation has a negligible effect on the reconstruction.

\subsection{Covariances in the likelihood function}\label{ss: likelihood cov}

As described in Section \ref{ss:AAO-likelihood}, the likelihood function in the Bayesian AAO approach consists of two components: the data misfit with  the data covariance $\Sigma_\epsilon$ and the PDE residual with residual covariance $\Sigma_{\text{res}}$.
For $\Sigma_{\text{
res}}$, %the covariance corresponding to the PDE-residual component, 
we assume a  Mat\'ern--Whittle covariance operator as in \eqref{3.MW} with  the residual level $\epsilon_{\text{res}}^2$ and $s={\nu + d/2}$. %, according to \eqref{cov-res}.
For the data covariance, we use $\Sigma_\epsilon=\epsilon^2 I$ as stated in \eqref{eq:observation model2}. We will later numerically investigate the impact of the residual level on the proposed Bayesian AAO reconstruction approach (Section~\ref{s:numerics}).

\subsection{Posterior Sampling}

To sample from the posterior distribution $p(\theta, u \mid y)$ in an all-at-once fashion, we employ a hybrid MwG sampler \cite{robert_monte_2004}. The posterior is updated blockwise by alternating between the conditional distributions $p(\theta \mid u, y)$ and $p(u \mid \theta, y)$. This allows each block to be sampled using a proposal that is well suited to its structure, such as a preconditioned Crank--Nicolson (pCN)  proposal \cite{cotter2013mcmc} for function-valued quantities, and a Metropolis--Hastings proposal for finite-dimensional parameter vectors.

At each iteration, the sampler alternates between two conditional updates:
\begin{enumerate}
    \item The first step targets $p(\theta\mid  u, y)$. If $\theta$ is a finite-dimensional parameter vector, we use a Metropolis--Hastings update with proposal $\theta^* \sim \mathcal{N}(\theta_n, \sigma^2 I)$. When $\theta$ is a function, we use a pCN step which is well-posed in infinite dimensions and preserves the prior by construction. 
    \item The second step targets $p(u \mid \theta, y)$. Since $u$ is function-valued, we always employ a pCN step.
\end{enumerate} 
A detailed algorithmic description of this process is given in Algorithms~\ref{alg:sampler}.
Note that in the reduced setting, only step (1) is necessary to sample from $p(\theta | y)$.

\begin{algorithm}[h!]
\caption{Hybrid MwG MCMC sampler for the Bayesian AAO}
\label{alg:sampler}
\begin{algorithmic}[1]
    \State \textbf{Input:} Initial values $\theta_0$, $u_0$; pCN step-size $\beta \in (0,1]$; proposal variance $\sigma^2$; number of iterations $N$
    \State \textbf{Output:} Samples $\{(\theta_n, u_n)\}_{n=1}^{N}$
    \medskip
    \For{$n = 0, 1, \ldots, N-1$}
        \medskip
        \State \textbf{// Step 1: update for $\theta_{n+1} \mid u_n, y$}
        \If{Metropolis step}
            \State Draw proposal 
            \begin{equation*}
                \theta^* \sim \N\!\left(\theta_n,\; \sigma^2\, I\right)
            \end{equation*}
            \State Accept $\theta_{n+1} = \theta^*$ with probability $\min\!\left(1, \dfrac{p(\theta^* \mid u_n, y)}{p(\theta_n \mid u_n, y)}\right)$, otherwise set $\theta_{n+1} = \theta_n$
        \ElsIf{pCN step}
            \State Draw $\xi_n^\theta \sim p(\theta)$ and propose
            \begin{equation*}
                \theta^* = \sqrt{1 - \beta^2}\; \theta_n + \beta\, \xi_n^\theta
            \end{equation*}
            \State Accept $\theta_{n+1} = \theta^*$ with probability $\min\!\left(1, \dfrac{p(y \mid \theta^*, u_n)}{p(y \mid \theta_n, u_n)}\right)$, otherwise set $\theta_{n+1} = \theta_n$
        \EndIf
        \medskip
        \State \textbf{// Step 2: pCN update for $u_{n+1} \mid \theta_{n+1}, y$}
        \State Draw $\xi_n^u \sim p(u)$ and propose
        \begin{equation*}
            u^* = \sqrt{1 - \beta^2}\; u_n + \beta\, \xi_n^u
        \end{equation*}
        \State Accept $u_{n+1} = u^*$ with probability $\min\!\left(1, \dfrac{p(y \mid \theta_{n+1}, u^*)}{p(y \mid \theta_{n+1}, u_n)}\right)$, otherwise set $u_{n+1} = u_n$
        \medskip
    \EndFor
\end{algorithmic}
\end{algorithm}

Alternatively, it is also possible to update $(\theta, u)$ jointly as in Algorithm~\ref{alg:jointsampler}. Block-wise updates sometimes yield higher acceptance probabilities in practice because each proposal perturbs only one component of the state at a time. The trade-off is that when the parameter blocks are strongly correlated, MwG samplers may mix slowly, as single-block updates can struggle to traverse directions of strong posterior dependence. A comparison of alternating versus joint updates is examined in Section \ref{sec:joint-sampler}.

\begin{algorithm}[h!]
\caption{Joint MCMC sampler for the Bayesian AAO}
\label{alg:jointsampler}
\begin{algorithmic}[1]
    \State \textbf{Input:} Initial values $\theta_0$, $u_0$; pCN step-size $\beta \in (0,1]$; proposal variance $\sigma^2$; number of iterations $N$
    \State \textbf{Output:} Samples $\{(\theta_n, u_n)\}_{n=1}^{N}$
    \medskip
    \For{$n = 0, 1, \ldots, N-1$}
        \medskip
        \If{Metropolis step}
            \State Draw proposal 
            \begin{equation*}
                \theta^* \sim \N\!\left(\theta_n,\; \sigma^2\, I\right)
            \end{equation*}
        \ElsIf{pCN step}
            \State Draw $\xi_n^\theta \sim p(\theta)$ and propose
            \begin{equation*}
                \theta^* = \sqrt{1 - \beta^2}\; \theta_n + \beta\, \xi_n^\theta
            \end{equation*}
        \EndIf
        \State Draw $\xi_n^u \sim p(u)$ and propose
        \begin{equation*}
            u^* = \sqrt{1 - \beta^2}\; u_n + \beta\, \xi_n^u
        \end{equation*}
        \medskip
        \medskip
        \State Accept $( \theta_{n+1}, u_{n+1}) = (\theta^*, u^*)$ with probability $\min\!\left(1, \dfrac{p(y \mid \theta^*, u^*)}{p(y \mid \theta_{n+1}, u_n)}\right)$, 
        \State otherwise set $(\theta_{n+1}, u_{n+1}) = (\theta_n, u_n)$.
        \medskip
    \EndFor
\end{algorithmic}
\end{algorithm}

\section{Numerical Experiments and Application To Semiconductor Devices}\label{s:numerics}
In this section, we consider numerical implementations of the AAO approach for the protptype a-b-c-s problem (Section \ref{num:abcs-problem}) then the semiconductor application (Section \ref{sec:semiconductor}).  For the  a-b-c-s problem, we reconstruct the unknown finite-dimensional parameters $a,b,c,s$, while for the semiconductor problem, the unknown doping profile $C \in H^1(\Omega)$ is a piecewise smooth function defined on two connected subdomains of $\dom$. 
Placing more emphasis on the semiconductor application, our plan is to examine the effect of
\begin{itemize}
    \item a baseline versus tightened prior for $u$ (Sections \ref{sec:prior-u-standard}-\ref{sec:prior-u-inform})
    
    \item PDE residual level (Section \ref{sec:res-level})
\item alternating versus jointly update $\theta, u$ (Section \ref{sec:joint-sampler})
\end{itemize}
on the reconstruction.

 We implement the numerical examples with the finite element software NGSolve~\cite{Schoeberl2014}. In particular, the state $u\in H^1(\dom)$ is discretized using finite elements  with polynomials of degree one. The doping profile $C$ is discretized using the same finite element space. 
 We use a mesh size of $0.05$ for the a-b-c-s problem, and of $0.1$ for the semiconductor problem with local refinement around the junction $\Gamma$ to resolve the discontinuity of $C$.

\subsection{The a-b-c-s problem}\label{num:abcs-problem}
We consider the a-b-c-s problem discussed in Remark \ref{rem:abcs-problem} 
\begin{align*}
    \nabla\cdot(a\nabla u)+\mathbf{b}\cdot\nabla u + cu +u^\gamma&=s \quad\text{on }\dom , \quad \gamma=2 \\
    u|_\bou&= 0
\end{align*}
 on the 2D domain $\Omega:=[0, 1]^2$. The parameters are $\theta: = (a, \mathbf{b}, c, s)$ where $a, c, s \in \R$ and $\b \in \R^2$. The true value of $\theta$ is given by drawing from a zero--centered Gaussian with identity covariance $\theta \sim \N\left((1, 1, 1, -1, 1)^\top, I_5\right)$, and the prior $p(\theta)$ is a Gaussian with identity covariance centered around a perturbation of the true values. 
The prior $p(u)$ is described in Section \ref{ss:prior sampling}.

The observation operator $L$ takes a solution $u$ to \eqref{eq: abcs} and maps it to the values of $u$ at an equidistant grid of $[0, 1]^2$ with $m = 20^2$ observed values. Observational noise follows the distribution $\N(0, \delta^2 I)$ with noise level $\delta = 0.005$, which is roughly $5\%$ of the maximum amplitude of $u$. For posterior sampling, we set the step size $\beta = 0.2$, the proposal variance $\sigma^2= 0.2^2$ and choose the residual covariance $\Sigma_{\text{res}}$ as in \eqref{3.MW} 
with variance $\sigma^2 = 0.05^2$, length-scale $l = 0.1$ and smoothness $\nu = 1$.

Figure~\ref{fig: abcs hist} depicts the posterior means with a confidence band and ground truth values.
For  demonstration, $2,000$ posterior samples for the AAO and reduced setting were generated. The first $1,000$ samples are discarded as burn-in, and the remaining samples are thinned by the factor $5$, meaning that four out of five consecutive samples are discarded to reduce correlations. Noticeably, generating posterior samples takes  $\approx6$ minutes for the reduced setting and approximately $\approx 3$ minutes for the AAO setting. This time reduction illustration the computational efficiency of the AAO inversion.

\begin{figure}[h!]
    \centering
    \includegraphics[width=0.5\linewidth]{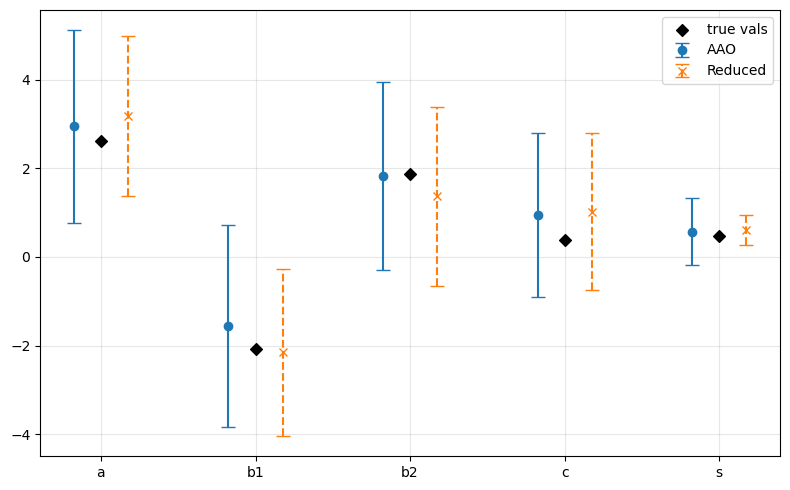}
    \caption{The a-b-c-s problem. Posterior means for AAO (blue) and reduced setting (orange) of four parameters $(a,(b_1,b_2),c,s)$ with a confidence band based on three posterior standard deviations and ground truth values (black). The variance for AAO appears slightly larger than the reduced setting, while posterior means of AAO are generally  closer to the ground truths.}
    \label{fig: abcs hist}
\end{figure}

In Figure~\ref{fig:abcs us}, the true state $u$, 
the posterior mean using the AAO approach, and a sample for the prior $p(u)$ are depicted. We observe that the variance for AAO inversion appears slightly larger than the reduced setting, while posterior means of the AAO approach
are generally slightly closer to the ground truths.

\begin{figure}[htbp]
\centering

\begin{subfigure}{0.32\textwidth}
    \centering
    \includegraphics[width=\linewidth]{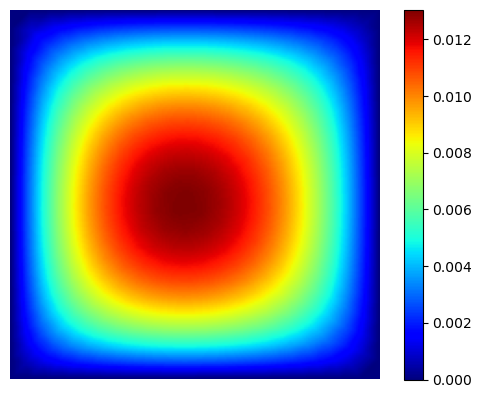}
    \caption*{true $u$}
\end{subfigure}
\hfill
\begin{subfigure}{0.32\textwidth}
    \centering
    \includegraphics[width=\linewidth]{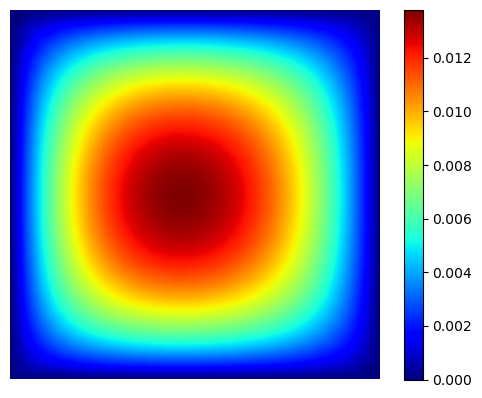}
    \caption*{mean of $u$ in AAO}
\end{subfigure}
\hfill
\begin{subfigure}{0.32\textwidth}
    \centering
    \includegraphics[width=\linewidth]{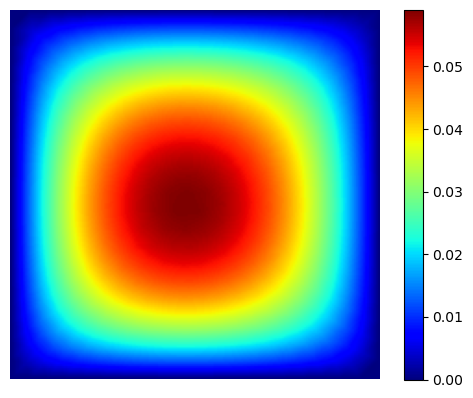}
    \caption*{sample from prior on $u$}
\end{subfigure}
\caption{a-b-c-s problem. Left: true state $u$ (corresponding to true $\theta$); middle: posterior mean of $u$; right: a prior sample from $p(u)$.}
\label{fig:abcs us}
\end{figure}

\FloatBarrier

\subsection{An application in semiconductor devices} \label{sec:semiconductor}
In this section, we illustrate the performance of the proposed AAO approach on a benchmark problem from semiconductor device modeling %, namely the Bayesian posterior reconstruction of 
for the doping profile in a 
$pn$-diode. Under a simplified stationary drift-diffusion model \cite{burger2001identification,taghizadeh2025bayesian}, the electrostatic potential 
$u$ inside the device is governed by a semilinear elliptic PDE, and the unknown doping profile 
$C=C(x)$ enters as the source term, coupling the $p$- and $n$-doped regions across a fixed junction; see Figure \ref{fig-semiconductor} for illustration. Reconstructing 
$C$ from noisy and indirect boundary observations of 
$u$ represents a modern and important application of the Bayesian inverse problem constrained by PDE. We begin with describing the forward model and the prior assumptions on the doping profile, and then examining the performance of the proposed AAO inversion approach compared to the conventional reduced method.
%%%%%%%%%%%%%%%%%%%%%%%%%%
\usetikzlibrary{patterns,calc}
%%%%%%%%%%%%%%%%%%%%%%%%%%%
\begin{center}\label{fig-semiconductor}
\begin{figure}
\begin{tikzpicture}[scale=1.2]
  % --- Geometry ---
  \coordinate (BL) at (0,0); % bottom-left
  \coordinate (BR) at (4,0); % bottom-right
  \coordinate (TR) at (4,4); % top-right
  \coordinate (TL) at (0,4); % top-left
  \coordinate (Jleft)  at (0,3);
  \coordinate (Jright) at (4,1);

  % --- Colored regions ---
  \begin{scope}
    \clip (BL) -- (BR) -- (TR) -- (TL) -- cycle;
    \fill[blue!12] (Jleft) -- (TL) -- (TR) -- (Jright) -- cycle;   % Omega_N region
    \fill[red!12]  (Jleft) -- (Jright) -- (BR) -- (BL) -- cycle;   % Omega_P region
  \end{scope}

  % --- Outer boundary ---
  \draw[thick] (BL) -- (BR) -- (TR) -- (TL) -- cycle;

  % --- Junction line ---
  \draw[very thick, violet] (Jleft) -- (Jright);
  \node[font=\footnotesize, above right] at ($(Jleft)!0.5!(Jright)$) {$\Gamma$};

  % --- Region labels ---
  \node[font=\footnotesize, blue!70!black] at (1.3,2.9) {$(\Omega_N, C_N)$};
  \node[font=\footnotesize, red!70!black]  at (2.7,0.9) {$(\Omega_P, C_P)$};

  % --- Boundary condition labels ---
  \node[font=\footnotesize, above, teal!80!black] at (2,4) {$\partial\Omega_D$};
  \node[font=\footnotesize, below, teal!80!black] at (2,0) {$\partial\Omega_D$};
  \node[font=\footnotesize, left,  orange!80!black]  at (0,2) {$\partial\Omega_N$};
  \node[font=\footnotesize, right, orange!80!black] at (4,2) {$\partial\Omega_N$};

  % --- Depletion region hint ---
  \draw[dashed, gray] ($(Jleft)!0.15!(Jright) + (0,0.35)$) -- ($(Jright)!0.85!(Jleft) + (0,0.35)$);
  \draw[dashed, gray] ($(Jleft)!0.15!(Jright) + (0,-0.35)$) -- ($(Jright)!0.85!(Jleft) + (0,-0.35)$);
\end{tikzpicture}
\caption{Illustration of the semiconductor domain $\Omega = \Omega_N\cup \Omega_P \subset \R^2 $ featuring $N$-type and $P$-type regions with  junction $\Gamma$ and boundary $\partial \Omega =  \partial \Omega_D \cup  \partial \Omega_N$.}
\end{figure}
\end{center}
%%%%%%%%%%%%%%%%%%%%%%%%%%
Our goal is to determine $C$ in elliptic PDE of the form
\begin{align}\label{eq:sensor model}
\begin{aligned}
    -\Delta u + f(u) &= C, \qquad &&x\in \Omega\subset \R^2 \\
    u &= u_{bi}, \qquad &&x \in \partial \Omega_D \\
    \nabla u \cdot \nu &= 0 \qquad && x \in \partial \Omega_N, 
\end{aligned}
\end{align}
modeling electrostatic potential in $pn$-junction diodes (c.f.~\cite{taghizadeh2025bayesian,schneider2026convex}),  where $\nu$ is the outer normal vector on $\partial \Omega_N$, and $u_{bi}$ a small built-in voltage and with $f(u) =\tfrac12 (e^u-e^{-u})$. The domain $\Omega = [-1, 1]^2$ models the cross-section of a $pn$-diode with Ohmic contacts $\Gamma_N = [-1, 1] \times \{+1\}$ and $\Gamma_P = [-1, 1] \times \{-1\}$, corresponding to two subdomains $\Omega_N$ and $\Omega_P$ connected by the junction line $\Gamma$.
We assume that the doping profile $C$ is given by
\begin{eqnarray}\label{eq: piecewise constant C}
    C(x) = \begin{cases}
        C_N(x)  & x \in \Omega_N, \\
        C_P(x) & x \in \Omega_P,
    \end{cases}
\end{eqnarray}
where $C_N$ and $C_P$ are independent Gaussian random fields. Both fields are equipped with a Mat\'{e}rn--Whittle covariance kernel with variance $\sigma^2 = 0.8^2$, smoothness parameter $\nu = 1.5$, and spatial correlation length $l = 4$.  

{The measurement is given by an observation of the boundary flux $\nabla u \cdot \nu$ on an equidistant grid at the top part of the boundary, i.e. $\partial\Omega_D \cap \overline{\Omega_N}$}
% reiterate that we impose Dirichlet conditions on this part of the boundary and a measurement of the boundary flux is informative.
%-- \red{How so?  In the eq. $\nabla u \cdot \nu=0$.}{\blue{[yes, but only on the right and left sides. We measure on the top boundary, where this condition does not apply.]}} 
and observational noise is given by a centered Gaussian distribution with covariance $\delta^2 I$ with noise level $\delta = 0.05$. The PDE residual is a Mat\'ern-Whittle covariance with variance $\sigma^2 = 0.05^2$, length-scale $l = 0.1$ and smoothness $\nu = 1$.

We consider two choices of prior for $u$.
\begin{enumerate}
    \item[(i)]\label{item:baseline-prior} \textbf{Baseline prior.} As the baseline prior, we use the prior $\mathcal{N}(u_0, \Sigma_{u_0})$ with covariance $\Sigma_{u_0}$ constructed via the linearized PDE solution map as in~\eqref{prior-u}. This represents the baseline level of prior knowledge on $u$, encoding only the uncertainty inherited from the parameter prior $\Sigma_{\theta_0}$ through the linearized forward map.
    
    \item[(ii)]\label{item:tightened-prior} \textbf{Tightened prior.} To investigate the sensitivity of the AAO approach to the prior specification on $u$, we consider a more concentrated prior $\mathcal{N}(u_0, 0.5^2\,\Sigma_{u_0})$. The covariance is shrunk with a factor of $0.5^2$ compared to the choice (i). This represents a scenario in which the practitioner assigns higher prior confidence to the neighbourhood of the linearized state $u_0 = S(\theta_0)$. We examine how the tightness of the state prior affects posterior accuracy and computational efficiency.
\end{enumerate}
% \red{I think we need to name the priors. From the descriptions, both covariances $\Sigma_{u_0}$ are constructed via the linearized PDE solution map in~\eqref{prior-u} (which is good); the difference is just the scaling factor regarding confidence. Naming one 'standard' and the other 'informed' might cause confusion. I think we should adjust the names here and others places they appear as well as in the abstract and conclusion.}
% \fabian{I renamed 'standard' to 'baseline' and 'informed' to 'tightened'. There is still some occurrences of 'PDE-informed prior' which describe how we put a prior on $u$}

\subsubsection{Baseline prior.}\label{sec:prior-u-standard} We use the baseline prior described in item \ref{item:baseline-prior}. 
We generate 15,000 samples, which are then thinned to 1,500 samples from the posterior distribution in the AAO and the reduced setting. 
As can be seen in Figure~\ref{fig: semiconductor; Cmean} and Table~\ref{tab: AAO metrics}, both approaches exhibit comparable MSE and empirical means. Overall, the variance in the AAO settings is slightly larger and divided more evenly in the domain. 

Importantly, generating samples with AAO setting is approximately twice as fast compared to the reduced setting. We therefore generate 30,000 samples, thinned to 3,000, for the AAO setting, %in comparable computational time to 
while generating 15,000 samples for the reduced setting. The larger sample count further reduces the bias, demonstrating that the AAO approach offers a dual advantage: one may either match the reduced setting's sample count and achieve a comparable posterior estimate in half the time, or exploit the speed gain to draw more samples and obtain a more 
accurate estimate within the same budget.

\begin{figure}[h]
\begin{subfigure}{0.23\textwidth}
    \centering
    \includegraphics[width=\linewidth]{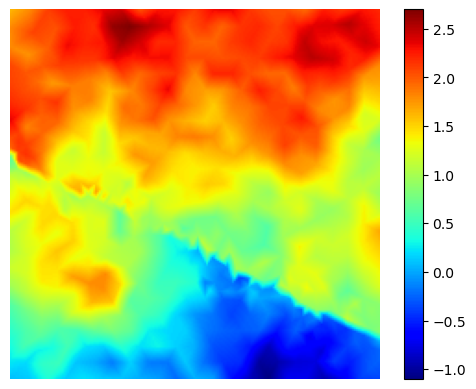}
    \caption*{ground truth C}
\end{subfigure}
\begin{subfigure}{0.23\textwidth}
    \centering
    \caption*{\textbf{Reduced}, 15k samples}
    \includegraphics[width=\linewidth]{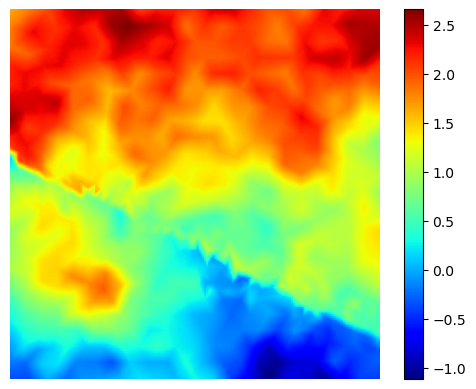}
    \caption*{posterior mean}
\end{subfigure}
\begin{subfigure}{0.23\textwidth}
    \centering
    \caption*{\textbf{AAO}, 15k samples}
    \includegraphics[width=\linewidth]{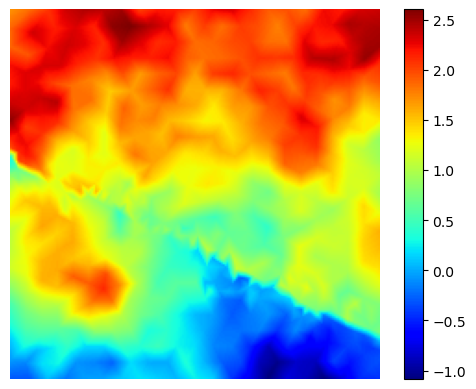}
    \caption*{posterior mean}
\end{subfigure}
\begin{subfigure}{0.23\textwidth}
    \centering
    \caption*{\textbf{AAO}, 30k samples}
    \includegraphics[width=\linewidth]{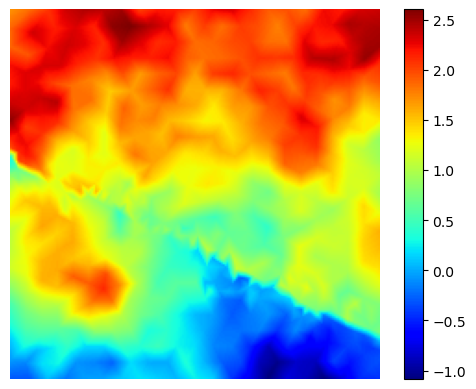}
    \caption*{posterior mean}
\end{subfigure}\\
\begin{subfigure}{0.23\textwidth}
    \phantom{\includegraphics[width=\linewidth]{images/semiconductor/Ctrue.png}}
\end{subfigure}
\begin{subfigure}{0.23\textwidth}
    \centering
    \includegraphics[width=\linewidth]{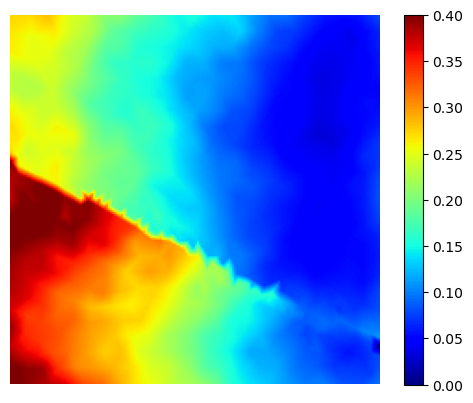}
    \caption*{variance}
\end{subfigure}
\begin{subfigure}{0.23\textwidth}
    \centering
    \includegraphics[width=\linewidth]{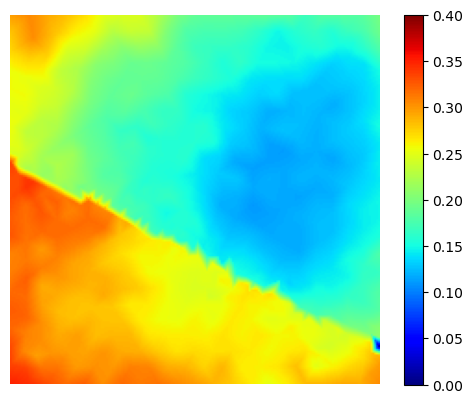}
    \caption*{variance}
\end{subfigure} 
\begin{subfigure}{0.23\textwidth}
    \centering
    \includegraphics[width=\linewidth]{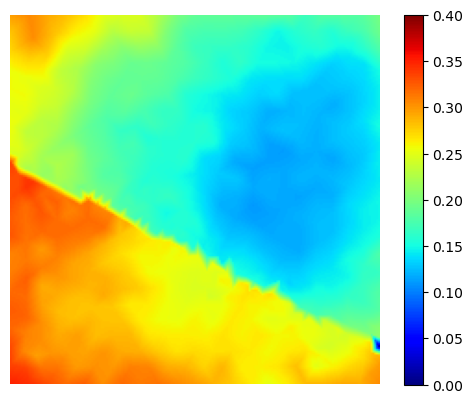}
    \caption*{variance}
\end{subfigure} \\
\begin{subfigure}{0.23\textwidth}
    \phantom{\includegraphics[width=\linewidth]{images/semiconductor/Ctrue.png}}
\end{subfigure}
\begin{subfigure}{0.23\textwidth}
    \centering
    \includegraphics[width=\linewidth]{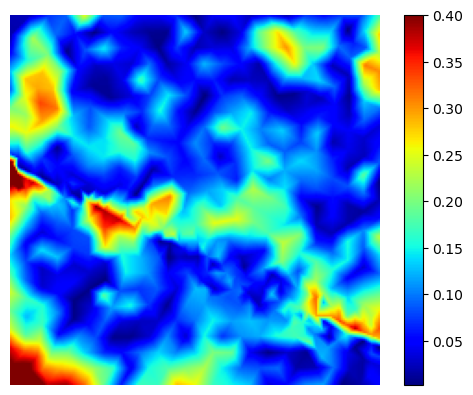}
    \caption*{bias}
\end{subfigure}
\begin{subfigure}{0.23\textwidth}
    \centering
    \includegraphics[width=\linewidth]{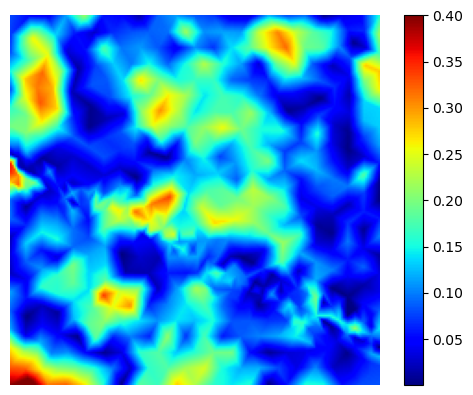}
    \caption*{bias}
\end{subfigure}
\begin{subfigure}{0.23\textwidth}
    \centering
    \includegraphics[width=\linewidth]{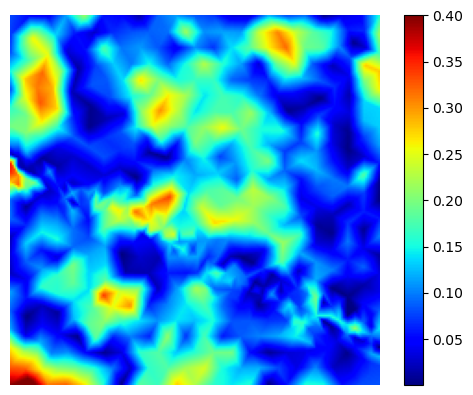}
    \caption*{bias}
\end{subfigure}
\caption{Posterior statistics of the doping profile $C$ for the AAO with baseline prior and reduced setting using $15,000$ posterior samples for both and additionally $30,000$ samples for AAO. \emph{Top row}: true $C$ and posterior means. \emph{Middle row}: posterior variances. \emph{Bottom row}: pixel--wise bias. The AAO approach achieves similar bias in about half the computational time compared to the reduced approach.}
\label{fig: semiconductor; Cmean}
\end{figure}

\begin{table}[h]
    \centering
    \caption{Comparison of computational time (in minutes), bias, variance, MSE and acceptance rate for the AAO with baseline prior and the classical reduced approach. The AAO approach achieves similar bias in about half the computational time and lower bias for comparable computational time versus the reduced approach.}
    \label{tab: AAO metrics}
    \begin{tabular}{c c c c c c c c}
        \hline
        & samples & time & bias & variance & MSE & \multicolumn{2}{c}{acceptance rate} \\
        \cline{7-8}
        & & & & & & $\theta$ & $u$ \\
        \hline
        Reduced & 15,000 & 17 & 0.0246 & 0.1736 & 0.1982 & 18.8\% & -- \\
        AAO & 15,000 & 9 & 0.0238 & 0.1990 & 0.2228 & 70\% & 12.4\% \\
        AAO & 30,000 & 18 & 0.0197 & 0.2134 & 0.2332 & 70\% & 12.3\% \\
        \hline
    \end{tabular}
\end{table}

\subsubsection{Tightened prior}\label{sec:prior-u-inform}
We consider a tightened, more informative prior by defining the prior on $u$ centered on $S(\theta_0)$ and with covariance scaled by $0.5^2$ relative to the baseline prior, as described in item \ref{item:tightened-prior}. 
Table~\ref{tab:  metrics 2} and Figure~\ref{fig: semiconductor; informative prior and joint sampler} compares the posterior results of the two prior scenarios, baseline versus tightened prior. We observe that, as expected, the tightened prior leads to a decrease in computational time, accelerating posterior sampling by a factor of about $1.6$ due to the higher acceptance rate of $u$. In addition, the posterior variance as well as the mean squared error (MSE) of the Bayesian AAO with the proposed tightened prior have decreased slightly.

\begin{table}[h]
    \centering
    \caption{Comparison of computational time (in minutes), bias, variance, MSE and acceptance rate for the AAO approach with the baseline and tightened prior. The tightened prior achieves higher acceptance rate and lower computational time.}
    \label{tab:  metrics 2}
\begin{tabular}{c c c c c c c c c}
     &  prior & samples & time & bias  & variance & MSE  & acceptance $\theta$ & acceptance $u$\\
     \hline
     AAO  & baseline & 30,000 & 18 & 0.0197 & 0.2134 & 0.2332 & 70\% & 12.3\% \\
    AAO  & tightened & 30,000 & 11 & 0.0207 & 0.1702 & 0.1908 & 70\% & 31.5\% \\
    \end{tabular}
\end{table}

\subsubsection{Influence of the PDE residual level %on the Bayesian posterior reconstruction
}\label{sec:res-level}
In this section, we analyze the influence of the PDE residual level on the posterior samples. The residual covariance $\sigres$ as described is chosen by Mat\'ern-Whittle covariance with varying variance $\sigma^2$, constant length-scale $l = 0.1$ and smoothness $\nu = 1$.
In Figure~\ref{fig:aao_delta}, we find that the MSE, variance and bias converge to the level of the reduced approach as $\sigma^2$ decreases, whereas the computational time blows up. Nonetheless, there is a regime of medium $\sigma^2$, where the AAO approach achieves faster posterior sampling.

\begin{figure}[h!]
    \centering
    \begin{subfigure}[t]{0.4\textwidth}
        \centering
        \includegraphics[width=\textwidth]{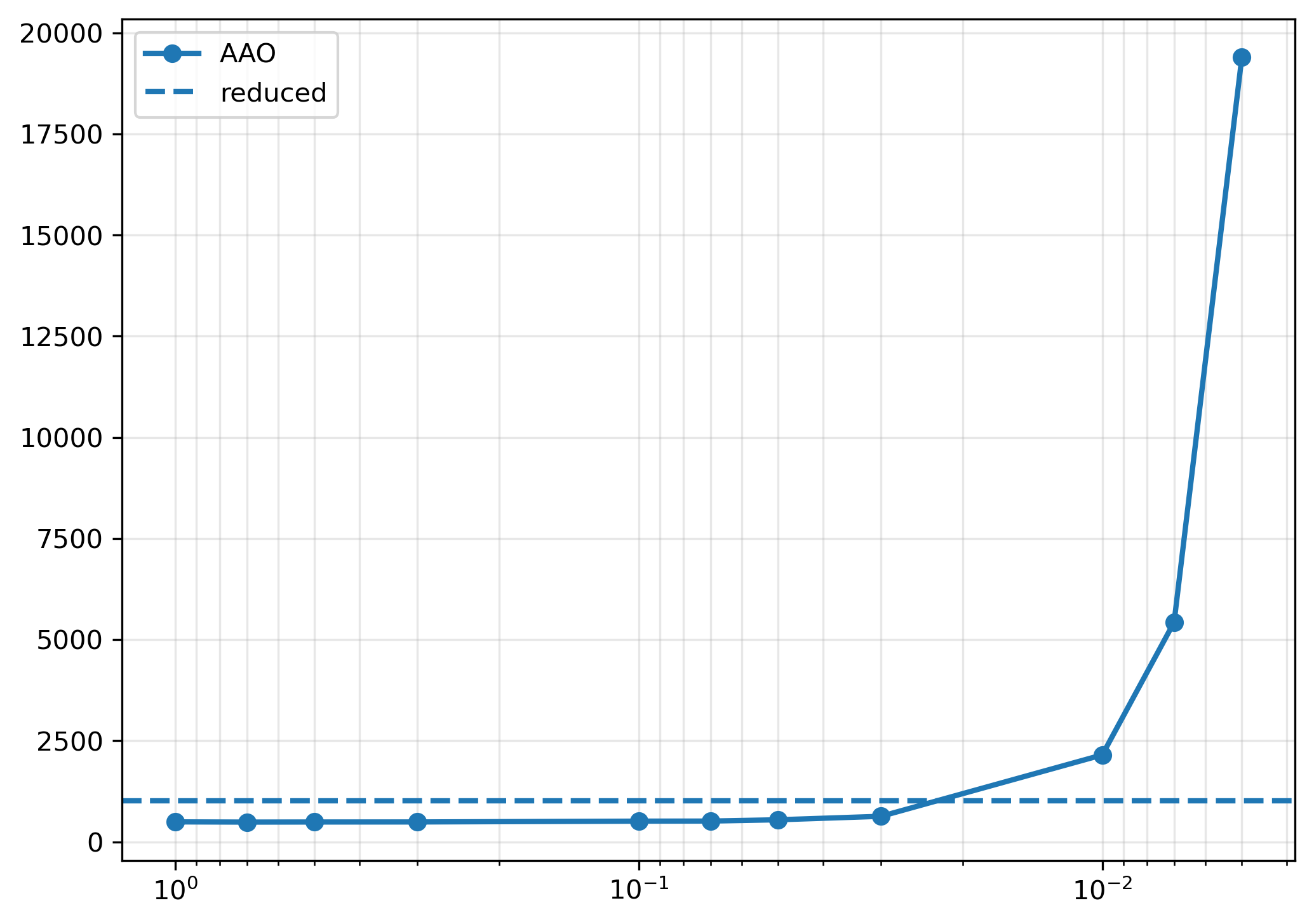}
        \caption*{computational time}
    \end{subfigure}
    %\hfill
    \begin{subfigure}[t]{0.4\textwidth}
        \centering
        \includegraphics[width=\textwidth]{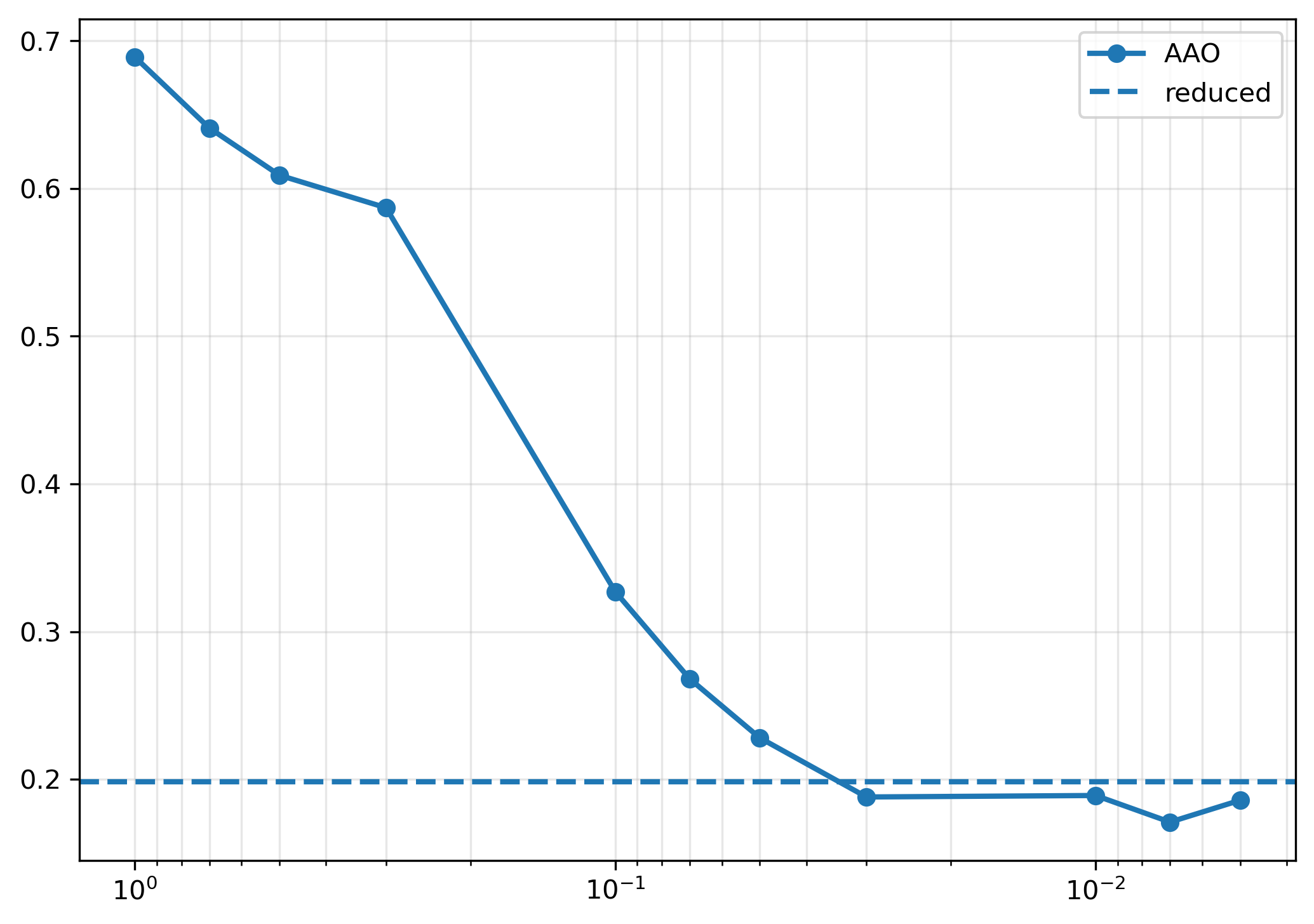}
        \caption*{MSE}
    \end{subfigure}
    %\hfill
    \begin{subfigure}[t]{0.4\textwidth}
        \centering
        \includegraphics[width=\textwidth]{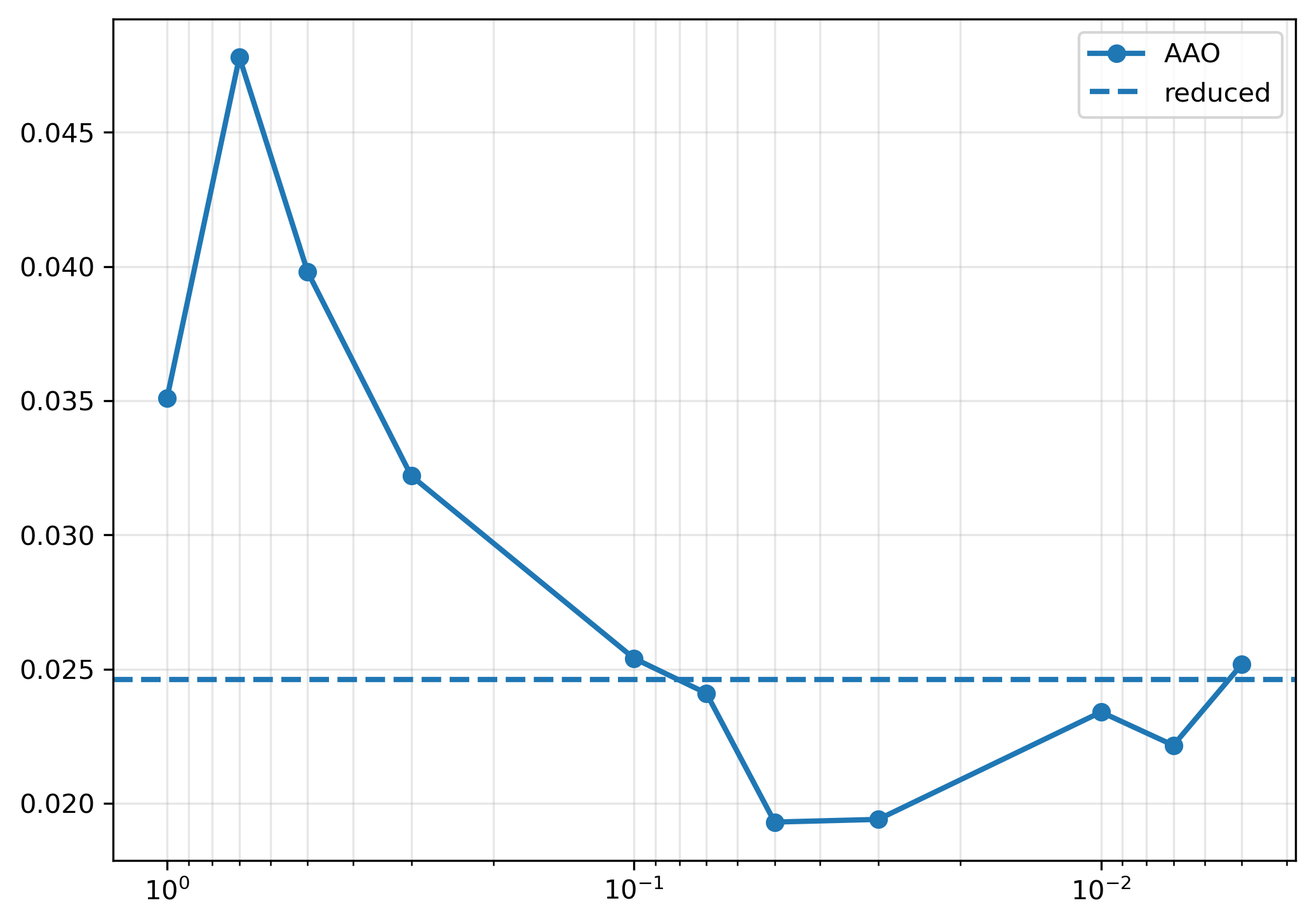}
        \caption*{bias}
    \end{subfigure}
    %\hfill
    \begin{subfigure}[t]{0.4\textwidth}
        \centering
        \includegraphics[width=\textwidth]{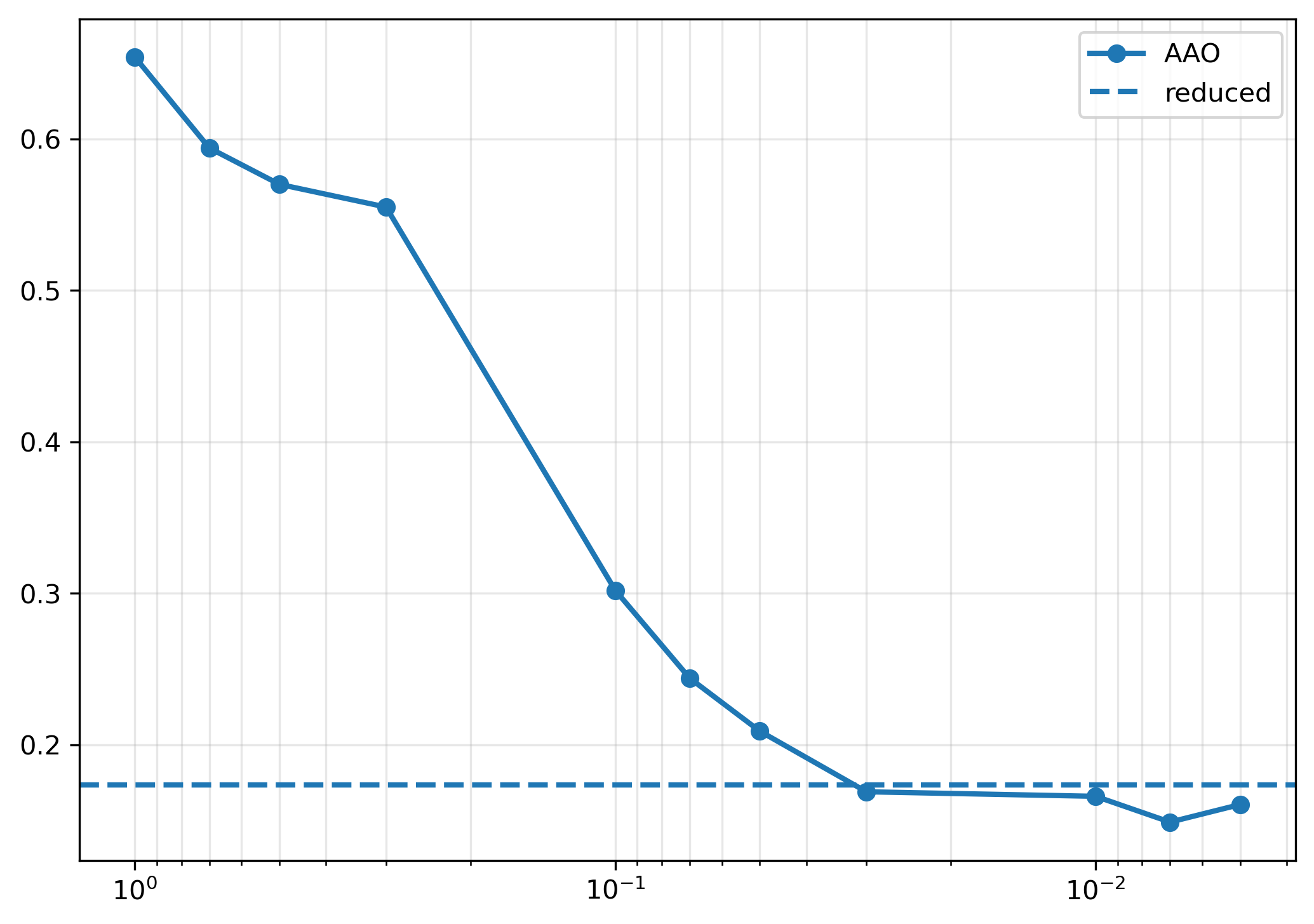}
        \caption*{variance}
    \end{subfigure}
    \caption{Performance of AAO approach with baseline prior for different variance level $\sigma^2$ in the residual covariance $\Sigma_{res}$. %of Mat\'ern-Whittle type with fixed length-scale and smoothens as a function of $\sigma$ 
    Horizontal axis in all plots: $\sigma$. Vertical axis in top left: computational time in seconds,  in top right: MSE, in bottom left: bias, in bottom right: variance.
    The dashed horizontal lines indicate the corresponding reduced values. MSE, variance and bias converge to the level of the reduced approach as $\sigma^2$ decreases.}
    \label{fig:aao_delta}
\end{figure}

\subsubsection{Joint sampler} \label{sec:joint-sampler}
Finally, we compare the proposed MwG sampler Algorithm \ref{alg:sampler} with the joint sampling Algorithm \ref{alg:jointsampler}. 
In Table \ref{tab:  joint sampler}, the joint sampling algorithm is able to slightly decrease posterior variance with lower computational time at the cost of a small increase in bias. 
Additionally, Figure~\ref{fig: semiconductor; informative prior and joint sampler} shows that MwG and joint samplers achieve similar posterior mean and variance. The tightened prior helps to reduce posterior variance effectively.

\begin{figure}[h]
\begin{subfigure}{0.23\textwidth}
    \centering
    \includegraphics[width=\linewidth]{images/semiconductor/Ctrue.png}
    \caption*{ground truth C}
\end{subfigure}
\begin{subfigure}{0.23\textwidth}
    \centering
    \caption*{\textbf{AAO}, baseline prior and MwG sampler}
    \includegraphics[width=\linewidth]{images/semiconductor/CpostAAO30k.png}
    \caption*{posterior mean}
\end{subfigure}
\begin{subfigure}{0.23\textwidth}
    \centering
    \caption*{\textbf{AAO}, baseline prior and joint sampler}
    \includegraphics[width=\linewidth]{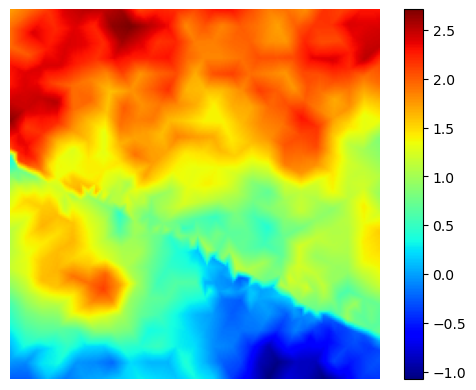}
    \caption*{posterior mean}
\end{subfigure}
\begin{subfigure}{0.23\textwidth}
    \centering
    \caption*{\textbf{AAO}, tightened prior and MwG sampler}
    \includegraphics[width=\linewidth]{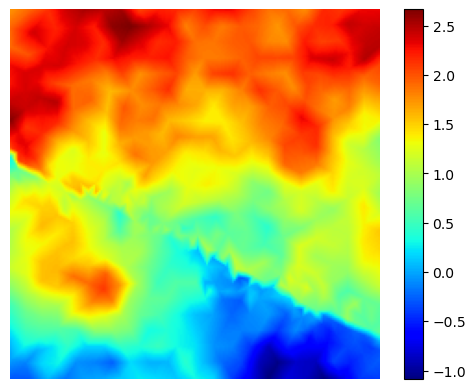}
    \caption*{posterior mean}
\end{subfigure}
\\
\begin{subfigure}{0.23\textwidth}
    \phantom{\includegraphics[width=\linewidth]{images/semiconductor/Ctrue.png}}
\end{subfigure}
\begin{subfigure}{0.23\textwidth}
    \centering
    \includegraphics[width=\linewidth]{images/semiconductor/VarAAO30k.png}
    \caption*{variance}
\end{subfigure}
\begin{subfigure}{0.23\textwidth}
    \centering
    \includegraphics[width=\linewidth]{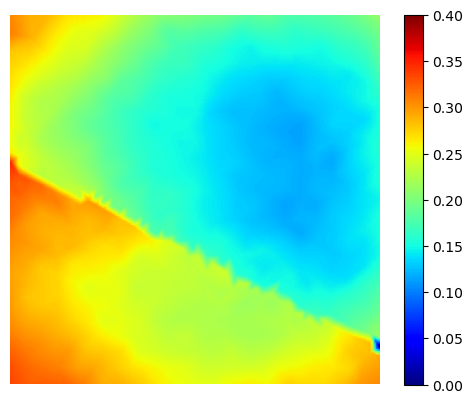}
    \caption*{variance}
\end{subfigure} 
\begin{subfigure}{0.23\textwidth}
    \centering
    \includegraphics[width=\linewidth]{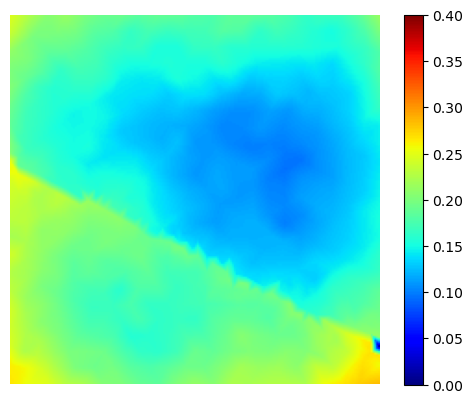}
    \caption*{variance}
\end{subfigure} 
\\
\begin{subfigure}{0.23\textwidth}
    \phantom{\includegraphics[width=\linewidth]{images/semiconductor/Ctrue.png}}
\end{subfigure}
\begin{subfigure}{0.23\textwidth}
    \centering
    \includegraphics[width=\linewidth]{images/semiconductor/BiasAAO30k.png}
    \caption*{bias}
\end{subfigure}
\begin{subfigure}{0.23\textwidth}
    \centering
    \includegraphics[width=\linewidth]{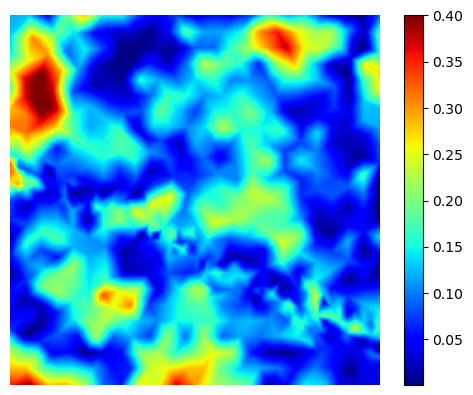}
    \caption*{bias}
\end{subfigure}
\begin{subfigure}{0.23\textwidth}
    \centering
    \includegraphics[width=\linewidth]{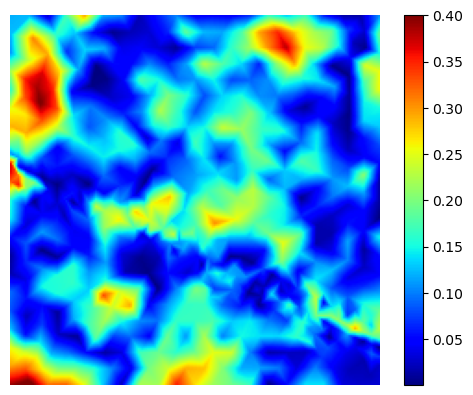}
    \caption*{bias}
\end{subfigure}
\caption{Posterior statistics of the doping profile $C$ for the AAO approach with $30,000$ posterior samples in three scenarios: baseline prior with MwG sampler, baseline prior with joint sampler and tightened prior with MwG sampler. \emph{Top row}: true $C$ and posterior means. \emph{Middle row}: posterior variances. \emph{Bottom row}: pixel--wise bias. MwG and joint samplers achieve similar posterior mean and variance. The tightened prior helps to reduce posterior variance.}
\label{fig: semiconductor; informative prior and joint sampler}
\end{figure}

\begin{table}[h]
    \centering
    \caption{Comparison of computational time (in minutes), bias, variance, MSE and acceptance rate for the AAO approach with MwG and joint sampler, both for the baseline prior.}
    \label{tab: joint sampler}
    \begin{tabular}{c c c c c c c c c}
        \hline
        & sampler & samples & time & bias & variance & MSE & \multicolumn{2}{c}{acceptance rate} \\
        \cline{8-9}
        & & & & & & & $\theta$ & $u$ \\
        \hline
        AAO & MwG   & 30,000 & 18 & 0.0197 & 0.2134 & 0.2332 & 70\% & 12.3\% \\
        AAO & joint & 30,000 & 17 & 0.0213 & 0.2075 & 0.2280 & \multicolumn{2}{c}{11.0\%} \\
        \hline
    \end{tabular}
\end{table}

%%%%%%%%%%%%%%%%%%%%%%%%%%%%%%%%%%%%%%%%%%%%%%%%%%%%%%%%%%%%%%%%%
\section{Conclusions}\label{s:conclusions}
In this work, we proposed a new all-at-once Bayesian inversion approach to reduce the computational costs 
when dealing with complex PDE models.  
Inspired by all-at-once optimization, we introduce a hybrid likelihood function comprising a data-misfit term and a PDE residual term. 
This leads to the treatment of the PDE residual as a probabilistic object %into the likelihood 
with a residual covariance, enabling sampling the unknown parameter and state simultaneously. 
Based on this, a Metropolis-within-Gibbs MCMC  and a joint sampling algorithms with PDE-informed prior
were proposed to speed up the posterior sampling process.
  
Numerical experiments show that the new Bayesian all-at-once approach outperforms the the conventional reduced setting in terms of computational time, while maintaining comparable inference accuracy.

Future work includes the extension of the all-at-once framework to bi-level settings \cite{nguyen24,nguyen25} for Bayesian inversion. {This extension leads to investigate how the hierarchical structure of bi-level setting can be incorporated into the Bayesian formulation, particularly in terms of constructing informative priors and designing efficient posterior sampling methods. To this end, developing hierarchical or conditional probabilistic models that account for the uncertainty introduced at both levels will enable more comprehensive quantification of uncertainties for the inferred parameters and PDE states. Furthermore, bi-level formulation may also motivate new blockwise Metropolis-within-Gibbs schemes that leverage the conditional structure of the posterior and reduce the cost of sampling. These developments will lead to a comprehensive statistical framework for scalable Bayesian inversion in complex PDE-constrained inverse problems.}

\section*{Acknowledgment} TN acknowledges the support of Cambridge Hub for Innovative Mathematics in Research and Application - CHIMiRA, Gates Foundation [grant number AW00262].
LT acknowledges support from the Austrian Science Fund (FWF), Elise-Richter grant DOI 10.55776/V1000. FS acknowledges funding support from the European Research Council (ERC) under the Horizon 2020 research and innovation program of the European Union (Grant agreement No. 101125225) as well as from the Austrian Science Fund (FWF) grant I6667-N.

\bibliographystyle{abbrv}
\bibliography{references}

\end{document}